\documentclass[12pt,a4paper]{article}
\usepackage[top=2.5cm,bottom=2.5cm,left=2.5cm,right=2.5cm]{geometry}

\usepackage{amssymb,amsmath,graphicx,color,amsthm,centernot}
\usepackage{hyperref}
\usepackage[capitalise]{cleveref}
\usepackage[labelformat=simple]{subcaption}
\usepackage[font=small,labelfont=bf,width=12.5cm]{caption}

\usepackage{multirow,tabularx}

\newtheorem{theorem}{Theorem}[section]

\newtheorem{corollary}[theorem]{Corollary}
\newtheorem{proposition}[theorem]{Proposition}
\newtheorem{observation}[theorem]{Observation}
\newtheorem{conjecture}[theorem]{Conjecture}

\newcommand{\set}[1]{\ensuremath{\left\{#1 \right\}}}

    {\noindent \emph{Proof.} {}{#1}{}}{$~$\hfill $~\blacklozenge$ \vspace{0.2cm}}

\definecolor{defblue}{rgb}{0.4,0,0.84}
\definecolor{greyblue}{rgb}{0.23,0.4,0.70}
\definecolor{orange}{rgb}{1.0,0.5,0.2}
\definecolor{violet}{rgb}{0.55,0,0.55}

\newcommand{\cf}{\ensuremath{\chi_{\mathrm{iCFo}}}}
\newcommand{\pcf}{\ensuremath{\chi_{\mathrm{pCFo}}}}
\newcommand{\cfc}{\ensuremath{\chi_{\mathrm{iCFc}}}}
\newcommand{\pcfc}{\ensuremath{\chi_{\mathrm{pCFc}}}}
\newcommand{\um}{\ensuremath{\chi_{\mathrm{iUMo}}}}
\newcommand{\pum}{\ensuremath{\chi_{\mathrm{pUMo}}}}
\newcommand{\umc}{\ensuremath{\chi_{\mathrm{iUMc}}}}
\newcommand{\pumc}{\ensuremath{\chi_{\mathrm{pUMc}}}}

\usepackage{url}
\makeatletter
\g@addto@macro{\UrlBreaks}{\UrlOrds}
\makeatother

\newcolumntype{Y}{>{\centering\arraybackslash}X}

\begin{document}

\title{{\bf Proper conf\mbox{}lict-free and unique-maximum \\ colorings of planar graphs \\ with respect to neighborhoods}}

\author
{
	Igor Fabrici\thanks{Faculty of Science, Pavol Jozef \v Saf\'{a}rik University, Ko\v{s}ice, Slovakia. \newline
		E-Mails: \texttt{\{igor.fabrici,roman.sotak\}@upjs.sk, simona.rindosova@student.upjs.sk}}, \
	Borut Lu\v{z}ar\thanks{Faculty of Information Studies in Novo mesto, Slovenia. 
		E-Mail: \texttt{borut.luzar@gmail.com}},\thanks{University of Ljubljana, Faculty of Mathematics and Physics, Slovenia.} \
	Simona Rindo\v sov\'{a}\footnotemark[1], \
	Roman Sot\'{a}k\footnotemark[1]
}

\maketitle

{
\begin{abstract}
	A {\em conflict-free coloring} of a graph {\em with respect to open} (resp., {\em closed}) {\em neighborhood}
	is a coloring of vertices such that for every vertex there is a color appearing exactly once in its 
	open (resp., closed) neighborhood.
	Similarly, 
	a {\em unique-maximum coloring} of a graph {\em with respect to open} (resp., {\em closed}) {\em neighborhood}
	is a coloring of vertices such that for every vertex the maximum color appearing in its	open (resp., closed) neighborhood
	appears exactly once.
	
	In this paper, we study both colorings in the proper setting 
	(i.e., we require adjacent vertices to receive distinct colors),
	focusing mainly on planar graphs.
	Among other results, 
	we prove that every planar graph admits a proper unique-maximum coloring with respect to open neighborhood using at most $10$ colors,
	and give examples of planar graphs needing at least $6$ colors for such a coloring.		
	We also establish tight upper bounds for outerplanar graphs.	
\end{abstract}
}

\medskip
{\noindent\small \textbf{Keywords:} plane graph, proper conflict-free coloring, proper unique-maximum coloring, closed neighborhood, open neighborhood}

\section{Introduction}

	A {\em conflict-free coloring} of a hypergraph is a coloring of the vertices
	such that in every hyperedge there is at least one color appearing only on one vertex.
	Motivated by the frequency assignment problem, 
	this type of coloring, in a language of set systems, was introduced by Even, Lotker, Ron, and Smorodinsky~\cite{EveLotRonSmo03} in 2003
	(see also~\cite{Smo03} for other applications),
	and received a considerable attention from the research community since then
	(for a survey see, e.g.,~\cite{Smo13} and the references therein).

	A related, but more restrictive notion is a {\em unique-maximum coloring} of a hypergraph;
	in such a coloring, in every hyperedge the maximal color appears on exactly one vertex.
	Clearly, every unique-maximum coloring is also conflict-free.
	For this reason and since it provides more structure, 
	the unique-maximum setting is often used also to prove results in the conflict-free setting~\cite{CheKesPal13}.

	The definition in terms of coloring hypergraphs provides a number of different variations of both colorings when restricted to graphs.
	In particular, the hyperedges of a hypergraph may represent, e.g., 
	neighborhoods of vertices of an underlying graph (see, e.g.,~\cite{AbeETAL17,BhyKalMat22,Che09,DebPrz22,KelRokSmo21,PacTar09}),
	paths in a graph (see, e.g.,~\cite{BorBudJenKra11,CheKesPal13,CheTot11,GreSkr12}),
	vertices incident with the same faces in a graph embedded in some surface (see, e.g.,~\cite{CzaJen09,FabGor16,Wen16}), etc.

	\medskip
	In this paper, we focus on conflict-free and unique-maximum colorings of graphs with respect to open and closed neighborhoods.	
	In these settings, the definitions read as follows.
	
	A {\em conflict-free coloring} of a graph $G$ {\em with respect to open} (resp., {\em closed}) {\em neighborhood}, 
	or an {\em $\mathrm{iCFo}$-coloring} (resp., an {\em $\mathrm{iCFc}$-coloring}) for short,
	is a coloring of the vertices such that in the open (resp., closed) neighborhood of every vertex
	there is at least one color appearing exactly on one vertex.
	The minimum number $k$ of colors such that $G$ admits an iCFo-coloring (resp., an iCFc-coloring) with $k$ colors is
	denoted by $\cf(G)$ (resp., $\cfc(G)$).
	
	A {\em unique-maximum coloring} of a graph $G$ {\em with respect to open} (resp., {\em closed}) {\em neighborhood}, 
	or an {\em $\mathrm{iUMo}$-coloring} (resp., an {\em $\mathrm{iUMc}$-coloring}) for short,
	is a coloring of the vertices such that in the open (resp., closed) neighborhood of every vertex
	the maximum color appears exactly on one vertex.
	The minimum number $k$ of colors such that $G$ admits an iUMo-coloring (resp., an iUMc-coloring) with $k$ colors is
	denoted by $\um(G)$ (resp., $\umc(G)$).
	
	There are many colorings regarding the neighborhoods of vertices under various assumptions,
	e.g., homogeneous colorings~\cite{JanMadSotLuz17} (in the open neighborhood of every vertex, the same number of colors appears), 
	dynamic~\cite{Mon01} (there is no vertex with only one color in its open neighborhood),
	adynamic colorings~\cite{SurLuzMad20} (there is at least one vertex with exactly one color in its open neighborhood), 
	odd colorings~\cite{PetSkr21} (in the open neighborhood of every vertex some color appears odd number of times),
	and square colorings~\cite{Tho18,Weg77} (in the open neighborhood of every vertex, every color appears at most once).	
	A vast majority of such colorings is also proper (i.e., adjacent vertices receive distinct colors)
	due to interesting combinatorial relationships with proper colorings. 	
		
	This motivated us to consider conflict-free and unique-maximum colorings with respect to neighborhood in a proper setting.
	We therefore define a {\em proper conflict-free coloring} of a graph $G$ {\em with respect to open} (resp., {\em closed}) {\em neighborhood}, 
	or a {\em $\mathrm{pCFo}$-coloring} (resp., a {\em $\mathrm{pCFc}$-coloring}) for short,
	as a proper coloring of the vertices such that in the open (resp., closed) neighborhood of every vertex
	there is at least one color appearing exactly on one vertex.
	The minimum number $k$ of colors such that $G$ admits a pCFo-coloring (resp., a pCFc-coloring) with $k$ colors is
	denoted by $\pcf(G)$ (resp., $\pcfc(G)$).
	Similarly, we define 
	a {\em proper unique-maximum coloring} of a graph $G$ {\em with respect to open} (resp., {\em closed}) {\em neighborhood}, 
	or a {\em $\mathrm{pUMo}$-coloring} (resp., a {\em $\mathrm{pUMc}$-coloring}) for short,
	as a proper coloring of the vertices such that in the open (resp., closed) neighborhood of every vertex
	the maximum color appears exactly on one vertex.
	The minimum number $k$ of colors such that $G$ admits a pUMo-coloring (resp., a pUMc-coloring) with $k$ colors is
	denoted by $\pum(G)$ (resp., $\pumc(G)$).

	\medskip
	This paper focuses on problems for the class of planar graphs,
	although we expect interesting properties of investigated colorings 
	will also be revealed for other classes and graphs in general.
	Restricting to planar graphs enables us to present results on four distinct proper colorings (conflict-free and unique-maximum regarding open and closed neighborhoods)
	while applying the same method of facial closures presented in the next section.
	Additionally, we discuss the corresponding results for the original (improper) four variants.	
	The conflict-free cases were already studied by Abel et al.~\cite{AbeETAL17}, Bhyravarapu and Kalyanasundaram~\cite{BhyKal20},
	Bhyravarapu, Kalyanasundaram, and Mathew~\cite{BhyKalMat22}, and Huang, Guo, and Yuan~\cite{HuaGuoYua20},
	whereas we introduce bounds for the unique-maximum cases.
	
	In particular, in~\cite{AbeETAL17}, partial conflict-free colorings were considered,
	meaning that some vertices may remain non-colored which (sometimes) results in a need for a new color in order to have all vertices colored.
	The authors proved a number of complexity results and several combinatorial bounds;
	namely, they proved that every $K_{k+1}$-minor-free graph $G$ has $\cfc(G) \le k+1$ ($k$ colors in a partial variant).
	It follows that every planar graph $G$ admits a partial coloring with at most $4$ colors. 
	In fact, as a corollary of the Four Color Theorem, we have $\cfc(G) \le 4$ (we discuss this in more detail in Observation~\ref{obs:pcfc}).
	They also proved that for every planar graph $G$, $\cf(G) \le 9$, that bound was improved to $6$ in~\cite{BhyKal20,BhyKalMat22},
	and to $5$ in~\cite{HuaGuoYua20}.
	In~\cite{BhyKal20,BhyKalMat22} and independently in~\cite{HuaGuoYua20}, it was also proved that for every outerplanar graph $G$, $\cf(G) \le 4$.
	
	We summarize known and our new results in Theorems~\ref{thm:im-pla}--\ref{thm:pl-out}, 
	and discuss them more thoroughly in subsequent sections.
	
	Note that for the open neighborhood variants of colorings presented in this paper, graphs with isolated vertices cannot be colored. 
	Therefore in those cases we restrict to graphs without isolated vertices, but we do not explicitly state it.
	
	Let $\mathcal{P}$ be the set of all planar graphs, 
	and let $\mathcal{O}$ be the set of all outerplanar graphs.
	Moreover, for an invariant $\chi_{.}$ and a graph class $\mathcal{C}$ define 
	$$
		\chi_{.}(\mathcal{C}) = \max \set{\chi_{.}(G) \ | \ G \in \mathcal{C}}\,.
	$$
	
	We first list results for improper variants on planar and outerplanar graphs.
	\begin{theorem}
		\label{thm:im-pla}
		For the class of planar graphs $\mathcal{P}$, it holds
		\begin{itemize}
			\item[$(a)$] $4 \le \cf(\mathcal{P}) \le 5$ (by~\cite{AbeETAL17} for the lower bound and by~\cite{HuaGuoYua20} for the upper bound);
			\item[$(b)$] $3 \le \cfc(\mathcal{P}) \le 4$ (by~\cite{AbeETAL17});
			\item[$(c)$] $5 \le \um(\mathcal{P}) \le 10$ (by Proposition~\ref{prop:iumo} and Theorem~\ref{thm:pumo});
			\item[$(d)$] $4 \le \umc(\mathcal{P}) \le 6$ (by Proposition~\ref{prop:iumc} and Theorem~\ref{thm:iumc}).
		\end{itemize}
	\end{theorem}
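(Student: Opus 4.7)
The plan is to verify each of the four bounds by combining results from the existing literature with propositions and theorems established later in this paper; the statement is a compilation, so the work is to assemble the pieces and justify the small links between them.

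Parts $(a)$ and $(b)$ reproduce results already in the literature. For $(a)$, the lower bound is witnessed by a specific planar graph constructed in~\cite{AbeETAL17} requiring four colors in any iCFo-coloring, and the upper bound of $5$ is the main technical result of~\cite{HuaGuoYua20}; I would simply cite these. For $(b)$, the lower bound of $3$ stems from an explicit planar example in~\cite{AbeETAL17}, while the upper bound follows immediately from the Four Color Theorem: in any proper $4$-coloring, every vertex bears its own color uniquely within its closed neighborhood, so the coloring is automatically iCFc. This last implication is worth stating explicitly as it yields $\cfc(\mathcal{P}) \le 4$ without further work.

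For $(c)$, the upper bound $\um(\mathcal{P}) \le 10$ is obtained by appealing to the stricter proper variant: since every pUMo-coloring is, by definition, also an iUMo-coloring, Theorem~\ref{thm:pumo} (which establishes $\pum(\mathcal{P}) \le 10$) immediately gives the bound. The lower bound of $5$ is supplied by Proposition~\ref{prop:iumo}, which exhibits a planar graph with iUMo-chromatic number at least $5$. Analogously, for $(d)$ the upper bound $\umc(\mathcal{P}) \le 6$ is provided directly by Theorem~\ref{thm:iumc}, and the matching lower bound of $4$ comes from Proposition~\ref{prop:iumc}, which constructs a planar graph requiring at least $4$ colors in any iUMc-coloring.

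The principal obstacle is not in assembling the four statements but in establishing the supporting results, particularly Theorem~\ref{thm:pumo}: the $10$-color bound for the proper unique-maximum open-neighborhood variant is expected to require a delicate inductive argument based on the facial-closure method introduced in the next section, and it is precisely the proper variant's ability to dominate the improper one that makes the chain $\um \le \pum \le 10$ available for part $(c)$. Once those later results are in hand, the present theorem amounts to bookkeeping, with the Four Color Theorem supplying the only external input not already cited.
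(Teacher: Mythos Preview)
Your proposal is correct and mirrors the paper's own treatment: the theorem is a compilation, and you have identified the same sources and the same linking observations (proper implies improper for $(c)$, Four Color Theorem for the upper bound in $(b)$). One small inaccuracy worth flagging: your expectation that Theorem~\ref{thm:pumo} will require ``a delicate inductive argument'' is off---the paper's proof is a direct construction (greedy bipartition into $V_1$ and $V_2$, then facial unique-maximum colorings of the two facial closures), not an induction.
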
	

	\begin{theorem}
		\label{thm:im-out}
		For the class of outerplanar graphs $\mathcal{O}$, it holds
		\begin{itemize}
			\item[$(a)$] $3 \le \cf(\mathcal{O}) \le 4$ (by~\cite{AbeETAL17} and~\cite{BhyKal20,BhyKalMat22,HuaGuoYua20});
			\item[$(b)$] $\cfc(\mathcal{O}) = 3$ (by~\cite{AbeETAL17});
			\item[$(c)$] $4 \le \um(\mathcal{O}) \le 5$ (by Proposition~\ref{prop:iumo-out} and Theorem~\ref{thm:pumo-out});
			\item[$(d)$] $3 \le \umc(\mathcal{O}) \le 5$ (by~\cite{AbeETAL17} and Corollary~\ref{cor:pumc-out}).
		\end{itemize}
	\end{theorem}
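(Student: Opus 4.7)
The proof of Theorem~\ref{thm:im-out} is essentially a consolidation: parts (a) and (b) quote the literature directly, while the lower and upper bounds in parts (c) and (d) reduce, via a few elementary monotonicities, to propositions, a theorem and a corollary established in the remainder of the paper. My plan is to assemble these pieces and to spell out the simple inequalities on which the reductions rest, rather than to produce any substantive new argument inside this particular statement.

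The two monotonicities that carry everything are the following. First, every iUM-coloring (open or closed) is automatically an iCF-coloring of the same type, since the uniquely occurring maximum color in each neighborhood is itself a conflict-free witness; hence $\cf(G)\le \um(G)$ and $\cfc(G)\le \umc(G)$ for every $G$. Second, every proper coloring is in particular an improper one, so $\um(G)\le \pum(G)$ and $\umc(G)\le \pumc(G)$. Both inequalities are immediate from the definitions and need no separate lemma.

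For part (c), the lower bound $\um(\mathcal{O})\ge 4$ is obtained from Proposition~\ref{prop:iumo-out}, which will exhibit a concrete outerplanar graph admitting no iUMo-coloring with three colors; the upper bound $\um(\mathcal{O})\le 5$ then follows from Theorem~\ref{thm:pumo-out} combined with $\um(G)\le \pum(G)$. For part (d), the lower bound $\umc(\mathcal{O})\ge 3$ follows from the fact that $\cfc(\mathcal{O})=3$ in~\cite{AbeETAL17} together with $\cfc(G)\le \umc(G)$, and the upper bound $\umc(\mathcal{O})\le 5$ follows from Corollary~\ref{cor:pumc-out} combined with $\umc(G)\le \pumc(G)$.

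The genuine difficulty therefore lies entirely in the supporting statements rather than in Theorem~\ref{thm:im-out} itself. The obstacles I expect to be most delicate are, on the one hand, designing a small outerplanar witness that rules out three colors for Proposition~\ref{prop:iumo-out} (the argument should come down to a case analysis forcing a tie at the maximum in some open neighborhood), and on the other hand, producing the $5$-coloring strategies underlying Theorem~\ref{thm:pumo-out} and Corollary~\ref{cor:pumc-out}, which the introduction signals will be handled by the facial-closure method presented in the next section.
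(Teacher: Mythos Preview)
Your consolidation is correct and matches the paper's own treatment: Theorem~\ref{thm:im-out} is stated as a summary with inline citations, and the reductions you give (unique-maximum implies conflict-free; proper implies improper) are exactly the elementary monotonicities the paper relies on implicitly.

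One small inaccuracy in your closing remarks, not affecting the proof of this theorem but worth correcting: Theorem~\ref{thm:pumo-out} is \emph{not} established via the facial-closure method. It is proved by a minimum-counterexample argument that uses the $2$-degeneracy of outerplanar graphs together with the structural Proposition~\ref{prop:outer-2trian} (a $2$-vertex on a triangle); Corollary~\ref{cor:pumc-out} then follows from it via Proposition~\ref{prop:cl-op}. The facial-closure technique is reserved for the planar upper bounds (Theorems~\ref{thm:iumc}, \ref{thm:pcfo}, \ref{thm:pumo}, \ref{thm:pumc}).
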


	The bounds for proper variants are next. 
	\begin{theorem}
		\label{thm:pr-pla}
		For the class of planar graphs $\mathcal{P}$, it holds
		\begin{itemize}
			\item[$(a)$] $6 \le \pcf(\mathcal{P}) \le 8$ (by Proposition~\ref{prop:pcfo} and Theorem~\ref{thm:pcfo} (and also as a corollary of~\cite[Theorem~28]{BhyKalMat22}));
			\item[$(b)$] $\pcfc(\mathcal{P}) = 4$ (by Observation~\ref{obs:pcfc});
			\item[$(c)$] $6 \le \pum(\mathcal{P}) \le 10$ (by Proposition~\ref{prop:pumo} and Theorem~\ref{thm:pumo});
			\item[$(d)$] $6 \le \pumc(\mathcal{P}) \le 8$ (by Proposition~\ref{prop:pumc} and Theorem~\ref{thm:pumc});
		\end{itemize}
	\end{theorem}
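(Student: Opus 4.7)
The proof of Theorem~\ref{thm:pr-pla} collects four independent bounds, each of which I would establish separately, so the plan is to treat the items one by one and, for each, exhibit a small planar example yielding the lower bound and a coloring procedure yielding the upper bound. Three of the four upper bounds will be obtained via the unified facial closure technique developed in the next section.

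The easiest item is (b). In any proper coloring of a graph $G$, every neighbor of a vertex $v$ receives a color distinct from the color of $v$, so the color of $v$ itself is unique in $N[v]$; hence every proper coloring is automatically a pCFc-coloring, which gives $\pcfc(G) \le \chi(G)$. The Four Color Theorem then yields $\pcfc(\mathcal{P}) \le 4$, while $K_4$, being planar, witnesses tightness even for proper colorings. This short argument will be formalized as Observation~\ref{obs:pcfc}.

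For the remaining items, the lower bounds in (a), (c), and (d) will be established in Propositions~\ref{prop:pcfo},~\ref{prop:pumo}, and~\ref{prop:pumc} by exhibiting explicit small planar graphs that cannot be colored in the respective setting with $5$ colors. The idea is to locate a critical vertex $v$ whose (open or closed) neighborhood, together with the propriety constraint, forces any $5$-coloring to either repeat the intended unique color or destroy the unique maximum; a careful case analysis on the placement of the unique (or maximum) color in the neighborhood of $v$ will yield the desired contradiction. The upper bounds in (a), (c), and (d) will come from Theorems~\ref{thm:pcfo},~\ref{thm:pumo}, and~\ref{thm:pumc}: starting from a proper $4$-coloring of an appropriate plane triangulation obtained via the facial closure construction, I would reassign vertices lying on a carefully chosen sparse subgraph using a small extra palette of high colors, so that in every relevant neighborhood there is a uniquely colored vertex (and, in the unique-maximum variants, a uniquely largest one), while propriety is preserved.

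The main obstacle is not in assembling Theorem~\ref{thm:pr-pla} itself, but in the proofs of the cited upper bounds, most notably Theorem~\ref{thm:pumo}, which yields ten colors for pUMo. The open-neighborhood setting is the most delicate, because the color of $v$ itself cannot serve to resolve $N(v)$, so the simple reduction used in item (b) is unavailable; instead, one must overlay additional high colors on top of a base coloring in such a way that at every vertex the open neighborhood contains a uniquely maximum color \emph{and} propriety holds, which is what accounts for the significant gap between the bounds in (b) and in (c).
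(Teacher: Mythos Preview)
Your plan matches the paper's structure: Theorem~\ref{thm:pr-pla} is a summary statement, and each item is indeed established exactly by the cited proposition/theorem, with (b) being the trivial observation $\pcfc=\chi$ plus the Four Color Theorem, the lower bounds in (a), (c), (d) coming from explicit planar examples analyzed by case distinctions, and the upper bounds from the facial closure machinery.

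One point where your sketch drifts from the actual argument is the description of the upper bound mechanism. The paper does \emph{not} start from a proper $4$-coloring of a triangulation and then overlay a sparse subgraph with extra high colors. Instead, it first splits $V(G)$ into an independent dominating set $V_1$ and its complement $V_2$ (via a greedy coloring, or via a $4$-coloring with the color class of size as large as possible in the pUMc case), forms the facial closures $\Phi_G(V_1)$ and/or $\Phi_G(V_2)$, and then applies the known \emph{facial} conflict-free ($\le 4$ colors, Theorem~\ref{thm:fac_cf}) or \emph{facial} unique-maximum ($\le 5$ colors, Theorem~\ref{thm:fac_um}) results to these closures on disjoint palettes. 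The neighborhood of each vertex becomes (the boundary of) a face in the appropriate closure, which is precisely what transfers the facial bound to the open-neighborhood bound. So the role of the Four Color Theorem is only indirect (inside Theorem~\ref{thm:fac_cf}, and to produce $V_1$ in the pUMc proof), and the ``extra palette'' is not small: for pUMo it is two full blocks of five colors each, which is where the $10$ comes from. If you rewrite your upper-bound paragraph with this two-step ``partition $+$ facial coloring of the closure'' picture, your proposal will coincide with the paper's proof.
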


	Note that we establish tight bounds for outerplanar graphs for all four variants.
	\begin{theorem}
		\label{thm:pl-out}
		For the class of outerplanar graphs $\mathcal{O}$, it holds
		\begin{itemize}
			\item[$(a)$] $\pcf(\mathcal{O}) = 5$ (by Corollary~\ref{cor:pcfo-out} and Observation~\ref{obs:pcfo-out});
			\item[$(b)$] $\pcfc(\mathcal{O}) = 3$ (by Observation~\ref{obs:pcfc});
			\item[$(c)$] $\pum(\mathcal{O}) = 5$ (by Theorem~\ref{thm:pumo-out} and Observation~\ref{obs:pcfo-out});
			\item[$(d)$] $\pumc(\mathcal{O}) = 5$ (by Corollary~\ref{cor:pumc-out} and Proposition~\ref{prop:pumc-out});
		\end{itemize}
	\end{theorem}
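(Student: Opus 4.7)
The plan is to handle the four parts separately. In each case the equality is obtained by combining an upper bound proved later in the paper with a matching lower bound either proved later or inherited by monotonicity from another of the four invariants, so the theorem itself is essentially a bookkeeping step.

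Part $(b)$ rests on the following simple observation, which I expect to be exactly the content of Observation~\ref{obs:pcfc}: in any proper coloring, the color of $v$ appears exactly once in $N[v]$, because every neighbor of $v$ receives a different color from $v$ by properness. Thus every proper coloring is automatically a pCFc-coloring, and $\pcfc(G) = \chi(G)$ whenever $G$ has no isolated vertex. Since every outerplanar graph is $3$-colorable while $K_3$ forces at least three colors, this yields $\pcfc(\mathcal{O}) = 3$ immediately.

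For parts $(a)$, $(c)$, $(d)$ the upper bounds are invoked as stated — from Corollary~\ref{cor:pcfo-out}, Theorem~\ref{thm:pumo-out}, and Corollary~\ref{cor:pumc-out}, respectively — so only the lower bounds need comment. In $(a)$, the lower bound is Observation~\ref{obs:pcfo-out} itself. In $(c)$, I would use the monotonicity $\pum(G) \ge \pcf(G)$, which holds because the unique maximum color of any pUMo-coloring certainly appears exactly once in every open neighborhood; the same outerplanar witness as in $(a)$ therefore gives $\pum(\mathcal{O}) \ge 5$. In $(d)$, the lower bound is Proposition~\ref{prop:pumc-out}, invoked as is.

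The genuine combinatorial work — designing the coloring procedures that realize the upper bounds and constructing the specific outerplanar graphs that force the lower bounds — lives entirely in the referenced statements, so no real obstacle arises at the level of the present theorem. The only small care needed is to record the two monotonicity remarks (namely $\pum \ge \pcf$ and the fact that every proper coloring is pCFc) before reading off the four equalities.
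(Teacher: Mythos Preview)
Your proposal is correct and matches the paper's treatment: Theorem~\ref{thm:pl-out} is indeed purely a bookkeeping statement, with each equality assembled from the referenced upper and lower bounds exactly as you describe, including the use of the monotonicity $\pum \ge \pcf$ (Observation~\ref{obs:cf-um}) to transfer the $C_5$ witness from $(a)$ to $(c)$. One minor inaccuracy: the clause ``whenever $G$ has no isolated vertex'' in part~$(b)$ is unnecessary, since for closed neighborhoods an isolated vertex $v$ has $N[v]=\{v\}$ and its own color is trivially unique there; Observation~\ref{obs:pcfc} holds for all graphs.
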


	The rest of the paper is structured as follows.
	In Section~\ref{sec:prel}, we present terminology and introduce several auxiliary results.
	Then, in Sections~\ref{sec:improper-cf} and~\ref{sec:improper-um}, we discuss in more detail the bounds for improper variants.
	In the subsequent four sections, we prove results for proper variants, 
	and in Section~\ref{sec:con}, we present ideas for further work and propose several open problems.

\section{Preliminaries}
\label{sec:prel}

In this section, we present terminology, notation, and auxiliary results that we are using in our proofs.

The open (resp., closed) neighborhood of a vertex $v$ in a graph $G$ is denoted by $N_G(v)$ (resp., $N_G[v]$),
and we omit the graph reference if it is clear from the context. 
A vertex of degree $k$ (resp., at most $k$, at least $k$) is a {\em $k$-vertex} (resp., {\em $k^-$-vertex}, {\em $k^+$-vertex}).

In a conflict-free coloring, every vertex $v$ must have in its neighborhood a color, 
which appears only once; 
we call such a color {\em unique}. 
On the other hand, in a unique-maximum coloring, for every vertex $v$ 
in its neighborhood the maximum color appears only once; 
we call such a color {\em maximum} and denote it $\mu(v)$
(we again omit specifying the type of the neighborhood).

When coloring vertices with at most $k$ colors, we always use the colors from the set $\set{1,\dots,k}$,
and we omit this remark in the proofs. Also, when a coloring variant is clear from the context, 
we sometimes refer to it simply as a coloring.

We first establish an evident relationship between colorings with respect to open and closed neighborhoods.
\begin{proposition}	
	\label{prop:cl-op}
	For every graph $G$, it holds 
	$$
		\pcfc(G) \le \pcf(G) \quad\quad\quad \mathrm{and} \quad\quad\quad \pumc(G) \le \pum(G)\,.
	$$ 
\end{proposition}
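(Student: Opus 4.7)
The plan is to show the stronger statement that any proper coloring witnessing the open-neighborhood variant automatically witnesses the corresponding closed-neighborhood variant; then taking the minimum over all such colorings yields both inequalities. So I would fix an optimal pCFo-coloring $c$ of $G$ (resp.\ an optimal pUMo-coloring) and argue directly that $c$ is also a pCFc-coloring (resp.\ pUMc-coloring) of $G$.

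For the conflict-free inequality, I would pick any vertex $v$ and let $\alpha$ be a color appearing exactly once in $N(v)$, say on a neighbor $u$ of $v$. The key observation is that because $c$ is proper, $c(v)\neq c(u)=\alpha$, so adding $v$ to the neighborhood cannot spoil the uniqueness of $\alpha$: the color $\alpha$ still occurs on exactly one vertex of $N[v]$, namely $u$. This is enough to conclude that $c$ is a pCFc-coloring and hence $\pcfc(G)\le\pcf(G)$.

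For the unique-maximum inequality, I would again fix a vertex $v$ and let $M=\mu(v)$ be the maximum color in $N(v)$, appearing on a unique neighbor $u$. I split into two cases according to how $c(v)$ compares with $M$. If $c(v)<M$, then $M$ remains the maximum in $N[v]$ and, by properness (which gives $c(v)\neq c(u)=M$), still appears only on $u$. If $c(v)>M$, then $c(v)$ is the new maximum in $N[v]$; since all neighbors of $v$ have color at most $M<c(v)$, this new maximum occurs only on $v$ itself. In either case the maximum color in $N[v]$ is unique, so $c$ is a pUMc-coloring and $\pumc(G)\le\pum(G)$.

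I do not expect a real obstacle here: the whole argument hinges on the single, almost trivial remark that properness ensures $c(v)$ differs from the color of every vertex of $N(v)$, which is exactly what is needed to pass uniqueness (of a designated unique color or of the maximum) from $N(v)$ to $N[v]$. The only subtlety worth stating explicitly is the case split in the unique-maximum case, to make sure the maximum color over $N[v]$ itself is correctly identified before checking its uniqueness.
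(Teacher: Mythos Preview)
Your proof is correct and follows essentially the same approach as the paper: show that any pCFo-coloring (resp.\ pUMo-coloring) is itself a pCFc-coloring (resp.\ pUMc-coloring) by using properness to guarantee $c(v)$ differs from the designated unique (resp.\ unique-maximum) color in $N(v)$, with the same case split on $c(v)\lessgtr\mu(v)$ in the unique-maximum part. Your write-up is in fact slightly more explicit than the paper's in handling the case $c(v)<M$.
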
	

\begin{proof}
	Both statements follow from the fact that a pCFo-coloring (resp., pUMo-coloring) $\sigma$
	is also a pCFc-coloring (resp., pUMc-coloring).
	Consider a vertex $v$. 
	Since the coloring is proper, its color is distinct from all colors in its open neighborhood,
	and therefore it is distinct from its unique (resp., unique maximum) color.
	The conflict-free statement is thus established. For the unique-maximum, observe that 
	either $\sigma(v) < \mu(v)$ or $\sigma(v) > \mu(v)$ 
	(here, $\mu(v)$ is the unique-maximum for $v$ in the pUMo-coloring $\sigma$). 
	In the latter case, $\sigma(v)$ is the unique maximum color of $v$ in its closed neighborhood and we are done.
\end{proof}

Another straightforward relationship follows from the fact that every unique-maximum coloring is also conflict-free.
\begin{observation}
	For every graph $G$, it holds 
	\label{obs:cf-um}
	$$
		\pcfc(G) \le \pumc(G) \quad\quad\quad \mathrm{and} \quad\quad\quad \pcf(G) \le \pum(G)\,.
	$$
\end{observation}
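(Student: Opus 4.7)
The plan is to show that any proper unique-maximum coloring is automatically a proper conflict-free coloring of the corresponding neighborhood type, so both inequalities follow at once from the definitions.

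Concretely, I would fix an optimal pUMc-coloring $\sigma$ of $G$ using $k = \pumc(G)$ colors and argue that $\sigma$ is itself a pCFc-coloring. Properness is part of the hypothesis on $\sigma$, so the only thing to verify is the conflict-free requirement: for every vertex $v$, there is some color appearing exactly once in $N[v]$. But by definition of a pUMc-coloring, the maximum color $\mu(v)$ in $N[v]$ appears on exactly one vertex of $N[v]$, so $\mu(v)$ itself witnesses the conflict-free condition at $v$. Hence any pUMc-coloring is a pCFc-coloring, which gives $\pcfc(G) \le \pumc(G)$.

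The second inequality $\pcf(G) \le \pum(G)$ follows from the identical argument with $N[v]$ replaced by $N(v)$ throughout: a pUMo-coloring has a unique maximum color in every open neighborhood, and that color is by definition one that appears exactly once there, so the coloring is pCFo.

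There is no real obstacle; the statement records the fact that ``the maximum color is unique'' is strictly stronger than ``some color is unique,'' and the same coloring therefore serves both roles with the same number of colors.
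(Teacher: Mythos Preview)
Your argument is correct and matches the paper's own treatment: the observation is stated without a formal proof, accompanied only by the remark that every unique-maximum coloring is also conflict-free, which is exactly what you spell out. There is nothing to add.
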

It is also trivial to see that every proper coloring is also improper. 

An important concept which enabled us to considerably improve upper bounds on the number of colors 
is the following.
The {\em facial closure $\Phi_G(X)$} of a plane graph $G$ with respect to the set $X$ of vertices 
is the simple plane graph with the vertex set $V(\Phi_G(X)) = V(G) \setminus X$ 
and two vertices $u$ and $v$ are adjacent in $\Phi_G(X)$ if they are adjacent in $G$ 
or if there is a vertex $x \in V(G)$ such that $ux$ and $vx$ are consecutive edges on a boundary of a face in $G \setminus (X \setminus \set{x})$,
i.e., the graph $G$ with all vertices of $X$ except $x$ removed
(see Figure~\ref{fig:closure} for an illustration).
\begin{figure}[htp!]
	$$
	\includegraphics{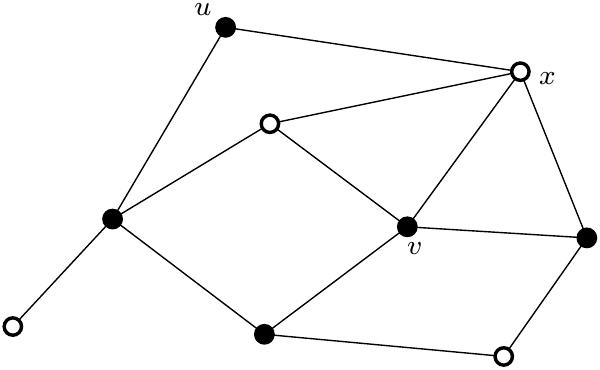} \quad\quad\quad\quad
	\includegraphics{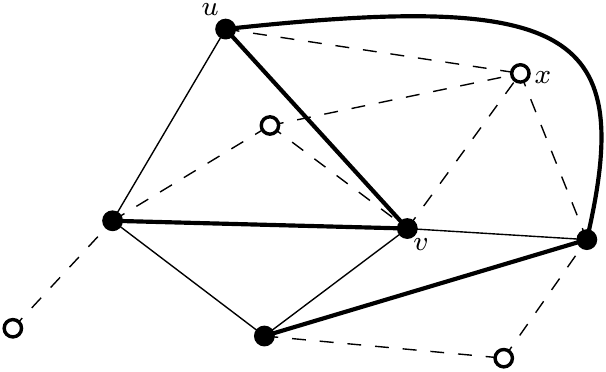}
	$$
	\caption{An example of a graph $G$ with the vertices of $X$ depicted as empty circles (left) 
		and the closure $\Phi_G(X)$ (right). 
		Note that the dashed edges and empty-circle vertices are not in $E(\Phi_G(X))$, and heavier edges are added.}
	\label{fig:closure}
\end{figure}
In other words, for every vertex $x \in X$ with at least $3$ neighbors from $V(G)\setminus X$,
there is a corresponding face in $\Phi_G(X)$ incident with all the neighbors of $x$ in $V(G)\setminus X$
in the order as they appear around $x$ in the embedding of $G$.
Additionally, if $x$ has $2$ neighbors in $V(G)\setminus X$, 
then the two neighbors are connected by an edge (we do not introduce parallel edges).
Note that a facial closure is also a plane graph.

In our proofs, we will use the following structural property of outerplanar graphs.
\begin{proposition}
	\label{prop:outer-2trian}
	In every outerplanar graph $G$ with minimum degree $2$, in which every $2$-vertex is adjacent to two $3^+$-vertices,
	there is a $2$-vertex incident with a triangle.
\end{proposition}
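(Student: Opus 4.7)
The plan is to reduce the problem to a $2$-connected outerplanar subgraph of $G$ and then extract a triangular ``ear'' face by means of its weak dual. I may assume that $G$ is connected, and I pick a leaf block $B$ of the block-cut tree of $G$ together with its (unique) cut vertex $c$; if $G$ is itself $2$-connected, I set $B = G$ and there is no $c$ to worry about. Because $G$ has minimum degree $2$, the block $B$ cannot be a single edge (otherwise the non-$c$ endpoint would be a $1$-vertex of $G$), so $B$ is a $2$-connected outerplanar graph on at least three vertices, its outer boundary is a Hamiltonian cycle of $B$, and $\deg_G(v) = \deg_B(v)$ holds for every $v \in V(B)\setminus\{c\}$.

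I then turn to the weak dual $T_B$ of $B$, which is a tree whose vertices are the inner faces of $B$, adjacent when they share an edge. If $T_B$ consists of a single vertex, then $B$ is a cycle of length $k \ge 3$ containing at most one cut vertex $c$, so at least two consecutive $2$-vertices of $G$ lie along this cycle; they are adjacent, contradicting the hypothesis. Hence $T_B$ has at least two vertices, and therefore at least two leaves, which correspond to ``ear'' faces of $B$, i.e.\ inner faces sharing exactly one edge with the rest of $B$.

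An ear face $F$ of length $\ell$ has $\ell - 2$ internal vertices, namely those of $F$ which are not endpoints of the shared edge. Each such internal vertex lies on exactly one inner face of $B$ and so has $\deg_B = 2$. In particular, the cut vertex $c$ can be an internal vertex of at most one ear, since this forces $c$ to lie on a single inner face of $B$. As there are at least two ears, I may pick an ear $F^\star$ whose internal vertices all lie in $V(B)\setminus\{c\}$, and therefore are $2$-vertices of $G$.

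The final step is to bound the length $\ell$ of $F^\star$. If $\ell \ge 4$, then $F^\star$ has two adjacent internal $2$-vertices, contradicting the hypothesis; hence $\ell = 3$, and the unique internal vertex of $F^\star$ is a $2$-vertex of $G$ incident with the triangle $F^\star$, which is exactly what the proposition asserts. I expect the main technical point to be the bookkeeping around the cut vertex $c$; the key observation that disposes of it is that $c$ can occupy the ``internal'' position of at most one ear face, so the presence of a second ear forces an ear whose internal part consists entirely of $2$-vertices, from which the triangle is extracted.
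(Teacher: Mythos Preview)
Your proof is correct and follows essentially the same route as the paper: pass to a leaf block of the block tree, take the weak dual of that block, and use its (at least two) leaf faces as ``ears''; your handling of the cut vertex~$c$ (it can be internal to at most one ear, so some ear has only genuine $2$-vertices internally, forcing that ear to be a triangle) makes explicit what the paper states rather tersely.
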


\begin{proof}
	Consider the block-tree $B$ of $G$, i.e., a tree in which every vertex corresponds to a block of $G$,
	and two vertices are adjacent in $B$ if the corresponding blocks have a common vertex in $G$.
	Let $x$ be a leaf in $B$ (or the only vertex of $B$ if $G$ is $2$-connected), 
	i.e., $x$ corresponds to a block $X$ of $G$ adjacent to at most one cutvertex.
	Note that the block $X$ is not a cycle, since there are no adjacent $2$-vertices in $G$.
	Therefore, the weak-dual of $X$ is a tree with at least $2$ vertices, and thus with at least two leaves.
	Recall that a leaf in a weak dual corresponds to a face $f_0$ with only one edge on the boundary which is not
	incident with the outerface. 
	Consequently, the face $f_0$ is a $3$-cycle in $X$ incident with exactly one $2$-vertex of $X$. 
	Since there are at least two such faces, at most one of them is incident with a $2$-vertex which is not a cutvertex of $G$, 
	and thus there is a $3$-cycle incident with a $2$-vertex in $G$.
\end{proof}

In some of our proofs, we are using results on conflict-free colorings of plane graphs with respect to faces.
In particular, a {\em facial conflict-free coloring} of a plane graph $G = (V,E,F)$ 
is a proper coloring of its vertices such that for every face $f \in F(G)$ there exists a color
which appears exactly once on the vertices incident with $f$.
A tight upper bound for this coloring was established by Czap and Jendrol'~\cite[Theorem~3.3]{CzaJen09}.
We will use its strengthened version.
\begin{theorem}
	\label{thm:fac_cf}
	For every plane graph $G$ and 
	every function $\zeta$ choosing for every face $f \in F(G)$ a vertex $\zeta(f)$ incident with $f$,
	there is a proper facial conflict-free coloring with at most $4$ colors
	such that the color of $\zeta(f)$ is a unique color for $f$.
\end{theorem}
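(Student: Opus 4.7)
The plan is to reduce the statement to the Four Color Theorem applied to a suitable auxiliary plane graph $G'$ built from $G$. The key observation is that a proper $4$-coloring of $G'$ will automatically assign $\zeta(f)$ a unique color on every face $f$ of $G$, as long as in $G'$ each vertex $\zeta(f)$ is adjacent to every other vertex incident with $f$.

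First, I would construct $G'$ as follows: starting from $G$, for every face $f$ and every vertex $u \neq \zeta(f)$ incident with $f$, add a new edge $\zeta(f)u$ drawn through the interior of $f$. All the new edges inside a single face $f$ share the endpoint $\zeta(f)$, so they can be realised as pairwise non-crossing chords of the disk bounded by $\partial f$; new edges inside distinct faces are separated by the respective face boundaries and also do not cross. Hence $G'$ is a plane (possibly multi-)graph, and its underlying simple graph is planar. The Four Color Theorem then yields a proper $4$-coloring $c$ of that simple graph. Since $E(G) \subseteq E(G')$, the coloring $c$ is automatically proper on $G$; and since in $G'$ the vertex $\zeta(f)$ is adjacent to every other vertex on the boundary of $f$, the color $c(\zeta(f))$ differs from the color of every other vertex of $f$, and is therefore a unique color for $f$.

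The main technical obstacle I expect will be handling plane graphs that are not $2$-connected, where a face boundary is a closed walk rather than a simple cycle and a vertex may appear on it several times. Each face is nevertheless a topological disk, so chords from $\zeta(f)$ to each occurrence of every other boundary vertex can still be inserted inside $f$ respecting the cyclic order of those occurrences around the walk, without introducing crossings; the construction of $G'$ and the rest of the argument then go through unchanged.
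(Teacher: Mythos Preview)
Your proof is correct and is essentially the same as the paper's: both construct the auxiliary plane graph $G'$ by adding, inside each face $f$, all missing edges from $\zeta(f)$ to the remaining vertices incident with $f$, and then apply the Four Color Theorem. Your discussion of the non-$2$-connected case is a helpful elaboration, but the core argument is identical.
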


\begin{proof}
	Let $G'$ be the graph obtained from $G$ by connecting every vertex $\zeta(f)$, for every $f \in F(G)$,
	with the vertices incident with $f$ which are not yet adjacent to $\zeta(f)$.
	By the Four Color Theorem, there is a proper coloring $\sigma$ of $G'$ using at most $4$ colors.
	Since for every face $f \in F(G)$, all its incident vertices are adjacent to $\zeta(f)$ in $G'$
	and hence have distinct colors, it follows that $\sigma$ is a proper facial conflict-free coloring of $G$.
\end{proof}

Similarly, a {\em facial unique-maximum coloring} of a plane graph $G = (V,E,F)$ 
is a proper coloring of its vertices such that for every face $f \in F(G)$ the maximum color
on the vertices incident with $f$ appears exactly once. 
Wendland~\cite{Wen16} improved the bound of $6$ colors from~\cite{FabGor16} to $5$.
Later, it turned out that there is an infinite family of examples attaining it~\cite{LidMesSkr18}, and so 
the upper bound is tight.
\begin{theorem}[Wendland~\cite{Wen16}]
	\label{thm:fac_um}
	Every plane graph $G$ admits a proper facial unique-maximum coloring with at most $5$ colors.
\end{theorem}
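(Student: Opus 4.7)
The plan is to argue by induction on $|V(G)|$, passing to a minimum counterexample $G$ and deriving a contradiction via a discharging argument. A standard reduction shows we may assume $G$ is $2$-connected, so every face is bounded by a cycle, and moreover $G$ has no vertex of degree at most $2$: such a vertex can be removed, the remainder coloured by induction with five colours, and the colouring extended because the removed vertex lies on at most two faces and has few neighbours, leaving enough freedom to pick a colour that is proper and lies strictly below the maxima on its incident faces.

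Next I would isolate a list of reducible configurations built around a short face $f$ together with its incident vertices. The key structural lemma would assert that if $f$ is a $3$-, $4$-, or $5$-face whose incident vertices have bounded degrees, then $G$ admits a reduction: delete or contract a carefully chosen subset of the configuration, five-colour the smaller plane graph by induction, and extend by assigning the removed vertices colours from $\{1,\dots,5\}$ so that (i) the colouring remains proper, and (ii) for every face meeting a new vertex, the maximum colour is still attained uniquely. Point~(ii) is the delicate one: introducing a large colour on a face may destroy its unique maximum, so the extension must be organised by first recording the pre-existing maxima $\mu(f')$ on the neighbouring faces $f'$ and choosing the new colours in decreasing order so that each choice either remains strictly below every relevant $\mu(f')$ or legitimately replaces it.

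Then I would run discharging. Assign each vertex $v$ the charge $\deg(v)-4$ and each face $f$ the charge $\ell(f)-4$; by Euler's formula the total charge is $-8$. I would design redistribution rules sending charge from $5^+$-vertices and long faces to $3$-vertices and short faces, and, invoking the absence of the configurations ruled out in the previous step, verify that every vertex and every face ends with nonnegative charge, contradicting the total $-8$.

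The main obstacle is the extendability verification in the reducibility step: in a unique-maximum face colouring the colour of a single vertex affects the maxima of several faces at once, so a seemingly innocuous local choice can spoil a distant face's unique maximum. I would handle this by treating colour~$5$ as a scarce resource reserved for vertices that genuinely serve as face maxima, controlling its use through the order in which configurations are reduced, and showing by a careful case analysis that each reducible configuration retains enough freedom on the lower colours $\{1,2,3,4\}$ to simultaneously satisfy properness and the unique-maximum condition on every incident face.
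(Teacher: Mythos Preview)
This theorem is not proved in the present paper; it is quoted from Wendland~\cite{Wen16} and used as a black box in the proofs of Theorems~\ref{thm:iumc}, \ref{thm:pumo}, and~\ref{thm:pumc}. So there is no in-paper proof to compare your attempt against.

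For what it is worth, your outline is in the right spirit: Wendland's argument is indeed a minimum-counterexample discharging proof. But what you have written is a plan, not a proof---the entire content of a discharging argument lies in the specific list of reducible configurations, the verification that each can actually be reduced and the colouring extended, and the exact discharging rules, none of which you supply. Already your degree-$\le 2$ reduction is shaky: deleting a $2$-vertex $v$ merges its two incident faces $f_1,f_2$ into a single face $f$; the inductive colouring guarantees a unique maximum on $f$, but when you reinsert $v$ you must restore unique maxima on \emph{both} $f_1$ and $f_2$ separately. If the unique maximum of $f$ lies in $f_1$, then the vertices of $f_2\setminus\{v\}$ may well attain their (smaller) maximum several times, and choosing $\sigma(v)$ ``strictly below the maxima on its incident faces'' does nothing to fix $f_2$. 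You may instead need $v$ itself to become the unique maximum of $f_2$ while simultaneously staying below the maximum of $f_1$ and differing from both neighbours---and five colours do not obviously suffice for that. Wendland resolves such tensions with a more elaborate set of reductions and a substantial case analysis; your sketch has not yet engaged with this difficulty.
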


\section{Improper conflict-free colorings}
\label{sec:improper-cf}

In this section, we review results on both coloring variants in the improper setting.

The upper bounds for iCFc-coloring of planar and outerplanar graphs were already established 
in~\cite{AbeETAL17} (see also Observation~\ref{obs:pcfc}).
The bound of $3$ colors for outerplanar (and planar) graphs is attained, e.g., by $C_5$ with one diagonal.
Abel et al.~\cite[Lemma~3.2]{AbeETAL17} also provided a more general construction of graphs $G_k$
for which $\cfc(G_k) \ge k$ (see Figure~\ref{fig:abelG3} for an example).
\begin{figure}[htp!]
	\centering
	\includegraphics{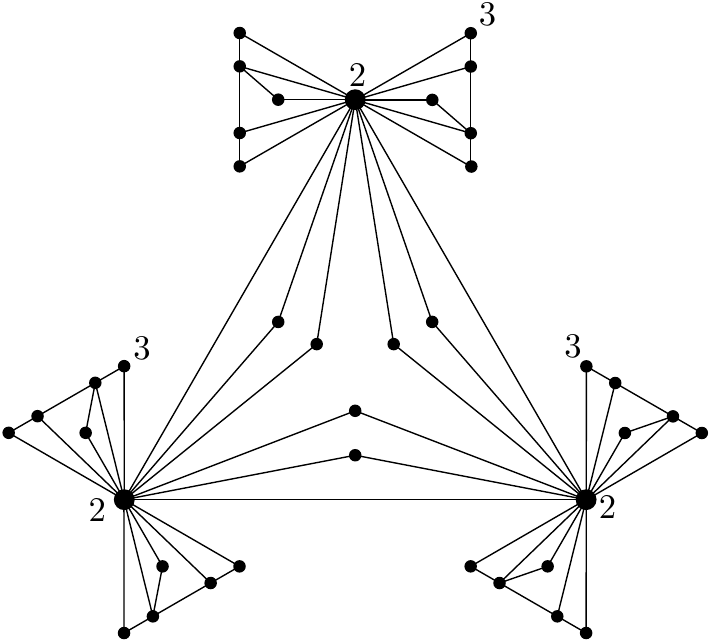}
	\caption{The planar graph $G_3$ with $\cfc(G_3) = 3$. Vertices with no color assigned are colored with color $1$.}
	\label{fig:abelG3}
\end{figure}	

The case of iCFo-coloring is even a bit more interesting.
The lower bounds are established by the fact, observed in~\cite{AbeETAL17},
that $\cf(\mathcal{S}(G)) \ge \chi(G)$, where $\mathcal{S}(G)$ is the graph obtained from $G$ by subdividing every edge once (see Figure~\ref{fig:icfo-noneq}).
\begin{figure}[htp!]
	\centering
	\includegraphics{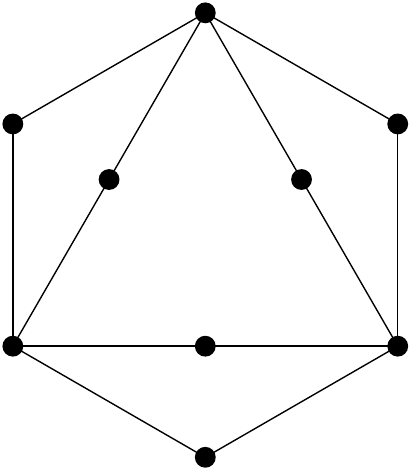}\quad\quad\quad
	\includegraphics{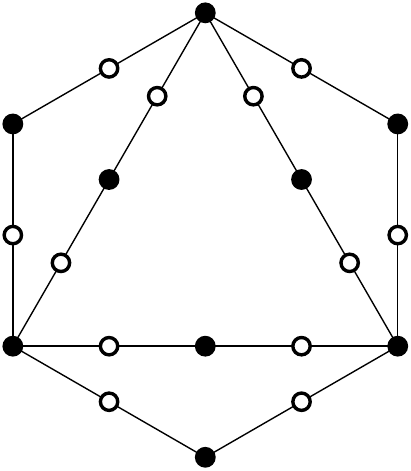}
	\caption{A planar graph $G$ with $\chi(G)=2$ (left) 
		and its subdivision $\mathcal{S}(G)$ with $\cf(\mathcal{S}(G))=3$ (right).}
	\label{fig:icfo-noneq}
\end{figure}
Every new vertex, added by subdivision, needs to have both neighbors colored with distinct colors
and thus at least $\chi(G)$ colors are needed for an iCFo-coloring of $\mathcal{S}(G)$.
Let us remark also that the equality does not hold for all graphs; 
consider, e.g., the graph depicted in Figure~\ref{fig:icfo-noneq}.

The upper bounds $5$ and $4$ for $\cf(\mathcal{P})$ and $\cf(\mathcal{O})$, respectively, were proved in~\cite{HuaGuoYua20} and~\cite{BhyKal20,BhyKalMat22,HuaGuoYua20},
and it seems there is still some space for their improvement.

\section{Improper unique-maximum colorings}
\label{sec:improper-um}

We are not aware of any specific results on unique-maximum colorings of planar graphs with respect to neighborhoods.
Therefore, the only known bounds are the constructions for lower bounds on conflict-free variants by Observation~\ref{obs:cf-um}.
We present here constructions improving all the lower bounds, except for the case $(d)$ of Theorem~\ref{thm:im-out}.

For the lower bound in Theorem~\ref{thm:im-pla}$(d)$, we use a bit simplified construction of Abel et al. mentioned in the previous section.
\begin{proposition}
	\label{prop:iumc}
	The planar graph $G_3'$ has $\umc(G_3') = 4$.
\end{proposition}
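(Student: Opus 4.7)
The plan is to establish both bounds on $\umc(G_3')$ separately, with the lower bound being the substantive part.

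For the upper bound $\umc(G_3') \le 4$, I would exhibit an explicit iUMc-coloring of $G_3'$ with $4$ colors. Since $G_3'$ is a simplification of the Abel et al.\ graph $G_3$, its structure suggests placing the top color $4$ on one carefully chosen vertex per gadget so that $4$ serves as the unique maximum in every closed neighborhood that meets it, and then completing the coloring with colors from $\{1,2,3\}$ on the remaining vertices so that every closed neighborhood avoiding color $4$ still admits a unique maximum. Verifying the unique-maximum condition at each closed neighborhood is then a finite direct check.

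For the lower bound $\umc(G_3') \ge 4$, I would suppose for contradiction that there is an iUMc-coloring $\sigma\colon V(G_3')\to\{1,2,3\}$ and derive a contradiction from the local structure. The starting observation is that, within each closed neighborhood $N[v]$, the largest color used must appear exactly once; in particular, if color $3$ appears in $N[v]$ then it appears on a single vertex of $N[v]$, and otherwise color $2$ appears on a single vertex of $N[v]$. I would then examine each gadget of $G_3'$ (which, following Abel et al., is designed so that a central vertex has many neighbors with prescribed color patterns) and enumerate the labelings admissible under this refined unique-maximum constraint. Because color $3$ plays the role of a global top color, the unique-maximum condition is strictly more restrictive than the conflict-free one, and the refined enumeration should leave no feasible completion of $\sigma$.

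The principal obstacle is the asymmetry of the color set: unlike in the conflict-free setting used by Abel et al., permuting the colors $1,2,3$ is not a symmetry of the unique-maximum problem, so one cannot invoke their analysis directly. Instead, the case analysis must branch on the specific color assigned to the central vertex of $G_3'$ and track how this choice propagates through the surrounding gadgets, possibly with further sub-cases depending on which of the two remaining colors dominates each peripheral closed neighborhood. Once the case distinctions are properly set up, the contradiction should emerge from two vertices which are each forced to be the unique maximum of some closed neighborhood being pushed into a common closed neighborhood with a common maximum color, violating uniqueness.
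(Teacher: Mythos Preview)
Your outlined strategy is precisely the paper's: give an explicit $4$-coloring for the upper bound, and for the lower bound assume an iUMc-coloring with colors $\{1,2,3\}$ and branch on the color of a hub vertex of $G_3'$, exploiting that colour permutations are not symmetries of the unique-maximum condition. So there is no divergence of approach.

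The gap is that your proposal is only a plan; nothing is actually proved. Every sentence is of the form ``I would do $X$'' rather than doing $X$: you neither write down the $4$-coloring nor perform any step of the case analysis. The paper's argument is short and concrete, and this is the content you are missing. For the upper bound it simply assigns colors $2,3,4$ to $x_0,y_0,z_0$ and $1$ to all remaining vertices. For the lower bound: if $\sigma(x_0)=1$, then the unique-maximum condition in $N[x_0]$ forces at most one $t_i$ to receive color $3$, so five of the $t_j$'s are confined to $\{1,2\}$, which is shown to be infeasible by a short local argument; by the symmetry among $x_0,y_0,z_0$ this rules out color $1$ on all three, and since at most one of them can be $3$, two of them (say $x_0,y_0$) equal $2$, forcing $\sigma(x_1)=3$, whence $\sigma(z_0)=2$ and $\sigma(z_1)\le 2$, a contradiction. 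Once you supply these details your write-up becomes a proof; as submitted, the substantive work is absent.
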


\begin{figure}[htp!]
	\centering
	\includegraphics{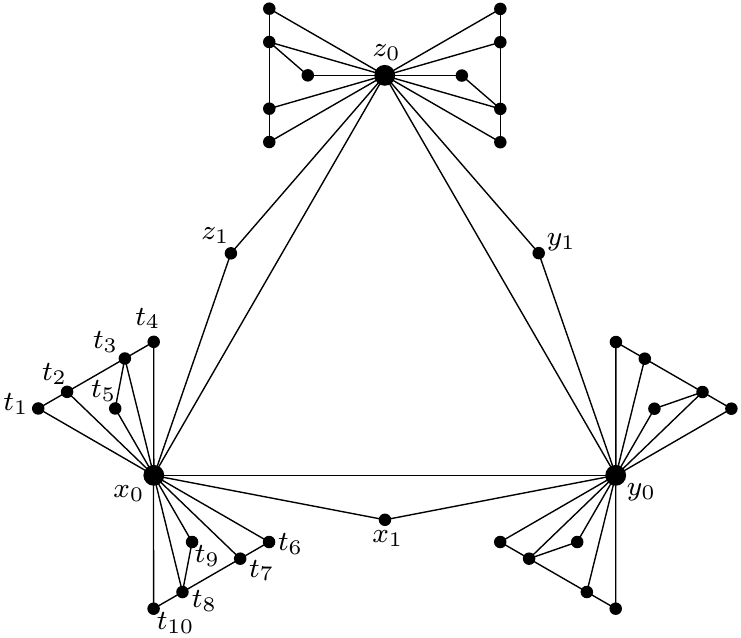}
	\caption{The planar graph $G_3'$ with $\umc(G_3') = 4$.}
	\label{fig:abelG3-um}
\end{figure}	

\begin{proof}
	Suppose to the contrary that there is an iUMc-coloring $\sigma$ of $G_3'$ using at most $3$ colors.
	We consider the cases regarding the color of $x_0$ and we use the vertex labelings as depicted in Figure~\ref{fig:abelG3-um}.
	
	Suppose first that $\sigma(x_0) = 1$.
	Then at most one of the vertices $t_i$, $i \in \set{1,\dots,10}$, is colored with $3$.
	This means that we may assume, without loss of generality, 
	that the vertices $t_j$, $j \in \set{6,\dots,10}$ are colored with $1$ or $2$.
	Since each vertex of color $1$ needs to be adjacent to exactly one vertex of color $2$ while vertices of color $2$ not being adjacent, this is not possible.
	
	Thus, by symmetry, none of the vertices $x_0$, $y_0$, and $z_0$ is colored with $1$, and so they must be colored with $2$ or $3$. 
	Moreover, at most one of them has color $3$.
	Thus, we may assume that $\sigma(x_0) = \sigma(y_0) = 2$ and consequently, $\sigma(x_1) = 3$.
	But then $\sigma(z_0) = 2$ and $\sigma(z_1) \le 2$, a contradiction.
	
	On the other hand, $4$ colors suffice; e.g., color $x_0$, $y_0$, and $z_0$ with $2$, $3$, and $4$, respectively,
	and all other vertices with $1$.
\end{proof}

The lower bound in Theorem~\ref{thm:im-out}$(c)$ is attained by the graph in Figure~\ref{fig:iumo-out}.
\begin{proposition}
	\label{prop:iumo-out}
	The outerplanar graph $O_{\mathrm{iUMo}}$ has $\um(O_{\mathrm{iUMo}})=4$.
\end{proposition}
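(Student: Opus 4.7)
The plan is to establish both inequalities $\um(O_{\mathrm{iUMo}})\le 4$ and $\um(O_{\mathrm{iUMo}})\ge 4$ for the specific construction $O_{\mathrm{iUMo}}$. The upper bound is the easy half: I would exhibit an explicit iUMo-coloring using the color set $\{1,2,3,4\}$, distributing the higher colors on a small set of ``pivot'' vertices chosen so that every open neighborhood in the graph contains exactly one vertex of its maximum color. Verifying correctness amounts to going through every vertex $v$ and checking that $\mu(v)$ appears exactly once in $N(v)$, which is a finite and mechanical verification on the given small outerplanar graph.

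For the lower bound I would argue by contradiction: suppose there is an iUMo-coloring $\sigma\colon V(O_{\mathrm{iUMo}})\to\{1,2,3\}$. The central observation is that color $3$ is forced to appear, since otherwise every vertex would have to see a unique $2$ in its open neighborhood, which already collapses under mild local constraints. I would then analyze the set $V_3=\sigma^{-1}(3)$: for every vertex $v$, either $v$ has a neighbor in $V_3$ and that neighbor is unique in $N(v)$, or $v$ has no neighbor in $V_3$ and then $\mu(v)\in\{1,2\}$ forces a unique $2$ in $N(v)$. Using the particular structure of $O_{\mathrm{iUMo}}$ (which, in analogy with Proposition~\ref{prop:iumc}, is built from several ``branches'' meeting at a few central vertices so that the unique-maximum condition pushes constraints from one branch into the others), I would track how the positions of colors $2$ and $3$ propagate through these branches, eventually reaching a vertex whose open neighborhood cannot simultaneously satisfy uniqueness of its maximum.

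The key bookkeeping step is a local lemma of the form: if a vertex $v$ has $\sigma(v)=3$, then $v$ has at most one neighbor of color $3$, and every neighbor of $v$ with two or more vertices of color $3$ in its neighborhood is already in conflict; similarly for color $2$ at vertices that do not see any $3$. Applying this on the pivot vertices of $O_{\mathrm{iUMo}}$ (the analogues of $x_0,y_0,z_0$ in the closed-neighborhood example) rules out the remaining colorings by symmetry.

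The main obstacle I expect is not the logic but the case analysis itself: open-neighborhood conditions are more delicate than closed-neighborhood ones because a vertex's own color does not participate in witnessing the maximum, so one cannot collapse symmetric cases as quickly as in the proof of Proposition~\ref{prop:iumc}. The hard work will therefore be an exhaustive but carefully symmetry-reduced enumeration of color patterns on the central pivot vertices of $O_{\mathrm{iUMo}}$, showing that each either fails properness with some leaf-branch constraint or fails the unique-maximum condition at one of the peripheral $2$-vertices.
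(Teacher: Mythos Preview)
Your general framework (exhibit a $4$-coloring for the upper bound; assume a $3$-coloring and derive a contradiction for the lower bound) is sound, and in that sense aligns with the paper. However, you are overestimating the difficulty of the lower bound and missing the structural shortcut that makes the argument almost immediate.

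The decisive feature of $O_{\mathrm{iUMo}}$ is that it contains degree-$2$ vertices whose open neighborhoods are precisely the pairs $\{x_0,y_0\}$, $\{x_0,z_0\}$, $\{y_0,z_0\}$ (and likewise $\{x_0,y_2\}$, $\{x_0,z_2\}$, $\{y_2,z_2\}$). For any $2$-vertex $u$ with $N(u)=\{p,q\}$, the iUMo condition forces $\sigma(p)\neq\sigma(q)$. Hence $\{\sigma(x_0),\sigma(y_0),\sigma(z_0)\}=\{1,2,3\}$ and $\{\sigma(x_0),\sigma(y_2),\sigma(z_2)\}=\{1,2,3\}$ outright --- no propagation or case enumeration on ``where color $3$ lives'' is needed. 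From here the paper finishes in two lines: if $\sigma(x_0)<3$ then $x_0$ already has two neighbors of color $3$ (one in each triple), contradicting uniqueness of the maximum in $N(x_0)$; if $\sigma(x_0)=3$ then either some $2$-vertex adjacent to $x_0$ also has color $3$ (pushing a double-$3$ into that $2$-vertex's other neighbor's neighborhood) or $\mu(x_0)\le 2$, which forces two neighbors of $x_0$ to carry color $2$.

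So the ``exhaustive symmetry-reduced enumeration'' you anticipate is unnecessary: once you observe that subdivided triangles force rainbow triples, the contradiction at $x_0$ is immediate. Your proposed local lemma (a color-$3$ vertex has at most one color-$3$ neighbor, etc.) is correct but is just a restatement of the definition and does not by itself drive the argument; the real engine is the pairwise-distinctness forced by the $2$-vertices. Rewrite the lower bound around that single observation and the proof collapses to a short paragraph.
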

\begin{figure}[htp!]
	\centering
	\includegraphics{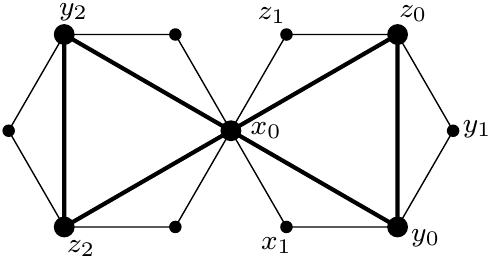}
	\caption{The outerplanar graph $O_{\mathrm{iUMo}}$ with $\um(O_{\mathrm{iUMo}}) = 4$.}
	\label{fig:iumo-out}
\end{figure}	

\begin{proof}
	Suppose to the contrary that there is an iUMo-coloring $\sigma$ of $O_{\mathrm{iUMo}}$ using at most $3$ colors.
	Note that $x_0$, $y_0$, and $z_0$ are colored distinctly, 
	otherwise at least one of the vertices $x_1$, $y_1$, and $z_1$ does not have a unique color in its open neighborhood.
	By the same argument as above, we infer that $\set{\sigma(x_0),\sigma(y_2),\sigma(z_2)} = \set{1,2,3}$, 
	and so either $\sigma(x_0) < 3$ and $x_0$ is adjacent to two vertices of the maximum color $3$,
	or $\sigma(x_0) = 3$. 
	In the latter case, we have two subcases. 
	First, $\mu(x_0) = 3$ and color $3$ appears on an adjacent $2$-vertex $v$,
	in which case the other neighbor of $v$ is incident with two vertices of the maximum color $3$. 
	Second, $\mu(x_0) < 3$, but then $x_0$ is adjacent to at least two vertices of color $2$, a contradiction.
\end{proof}

We use a construction analogous to the one in the previous case to obtain the lower bound in Theorem~\ref{thm:im-pla}$(c)$; 
see the graph in Figure~\ref{fig:iumo-pla}.
\begin{proposition}
	\label{prop:iumo}
	The planar graph $H_{\mathrm{iUMo}}$ has $\um(H_{\mathrm{iUMo}}) = 5$.
\end{proposition}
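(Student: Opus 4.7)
The plan is to adapt the three-arm argument of Proposition~\ref{prop:iumo-out} to a richer planar construction $H_{\mathrm{iUMo}}$, which (based on the analogy explicitly drawn in the paper) I would expect to carry one additional ``arm'' — that is, four centres $x_0,y_0,z_0,w_0$ each carrying a pair of 2-vertices $*_1$ and a secondary ``outer'' vertex $*_2$, with some extra planar connections among the secondaries to make the argument go through.

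For the upper bound $\um(H_{\mathrm{iUMo}}) \le 5$ I would simply exhibit a colouring: give the four central vertices pairwise distinct colours from $\{2,3,4,5\}$, place colour $5$ only at a vertex chosen so that it serves as the unique maximum in every open neighbourhood that needs it, and colour everything else with colour $1$. Verifying this directly is a finite check face by face.

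For the lower bound, I would argue by contradiction: assume an iUMo-colouring $\sigma$ using at most $4$ colours. Following the template of Proposition~\ref{prop:iumo-out}, the first step is to show that the four centres $x_0,y_0,z_0,w_0$ must receive pairwise distinct colours, because if two of them shared a colour then the 2-vertex $*_1$ adjacent to both would have its neighbourhood contain that colour twice, and so its unique maximum $\mu(*_1)$ could not lie on either centre; the remaining forced colour equalities on the other neighbour of $*_1$ then reproduce a contradiction exactly as in the outerplanar proof. This uses up all four available colours on the centres. The second step is to replay the same argument ``one level out'', to conclude that $\{\sigma(x_0),\sigma(y_2),\sigma(z_2),\sigma(w_2)\}=\{1,2,3,4\}$ (or a cyclic variant), and then to do a case analysis on which centre carries the colour $4$: in each case, either the corresponding centre is adjacent in its open neighbourhood to two vertices of colour $4$ (so $\mu$ is not unique there), or — when $\mu(\text{centre})<4$ — the centre is forced to be adjacent to two vertices of colour $3$, again a contradiction.

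The main obstacle is the case analysis in this second step. With four arms and four colours there is more room to manoeuvre than in the three-arm outerplanar version, so I would have to be careful to enumerate subcases according to whether the colour $4$ appears on a centre, on a ``2-vertex arm'', or on a secondary vertex, and verify that each subcase forces two maxima in some open neighbourhood. The planar (as opposed to outerplanar) freedom also means I may need to use an extra adjacency among the secondaries $*_2$ to close off a case that remains open in the outerplanar analogue; checking that this added edge does not break planarity is where the specific construction of $H_{\mathrm{iUMo}}$ matters.
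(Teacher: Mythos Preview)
Your proposal is correct and follows essentially the same approach as the paper's proof: show the four centres receive pairwise distinct colours via the subdivided-$K_4$ two-vertices, repeat the argument ``one level out'' to get $\{\sigma(x_0),\sigma(y_1),\sigma(w_1),\sigma(z_1)\}=\{1,2,3,4\}$, and then split on the colour of $x_0$. The case analysis you worry about is in fact very short, because $x_0$ is the vertex shared by both subdivided $K_4$'s: if $\sigma(x_0)<4$ then $x_0$ sees colour $4$ once in each copy (contradiction), while if $\sigma(x_0)=4$ then either some adjacent $2$-vertex $v$ also has colour $4$ (and the other neighbour of $v$ sees two $4$'s) or $\mu(x_0)<4$ and $x_0$ is adjacent to colour $3$ in each copy---no further enumeration of ``which centre carries $4$'' is needed.
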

\begin{figure}[htp!]
	\centering
	\includegraphics{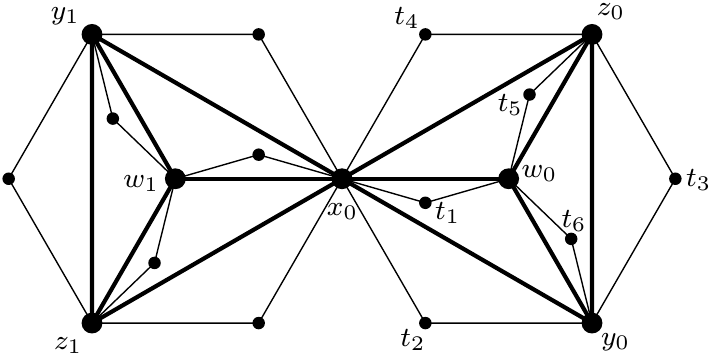}
	\caption{The planar graph $H_{\mathrm{iUMo}}$ with $\um(H_{\mathrm{iUMo}}) = 5$.}
	\label{fig:iumo-pla}
\end{figure}	

\begin{proof}
	Suppose to the contrary that there is an iUMo-coloring $\sigma$ of $H_{\mathrm{iUMo}}$ using at most $4$ colors.
	Note that $x_0$, $y_0$, $w_0$ and $z_0$ are colored distinctly, 
	otherwise at least one of the vertices $t_i$, $1 \le i \le 6$, does not have a unique color in its open neighborhood.
	By the same argument as above, we infer that $\set{\sigma(x_0),\sigma(y_1),\sigma(w_1),\sigma(z_1)} = \set{1,2,3,4}$, 
	and so either $\sigma(x_0) < 4$ and $x_0$ is adjacent to two vertices of the maximum color $4$, 
	or $\sigma(x_0) = 4$.
	In the latter case, we have two subcases. 
	First, $\mu(x_0) = 4$ and color $4$ appears on an adjacent $2$-vertex $v$,
	in which case the other neighbor of $v$ is incident with two vertices of the maximum color $4$. 
	Second, $\mu(x_0) < 4$, but then $x_0$ is adjacent to at least two vertices of color $3$, a contradiction.	
\end{proof}

Regarding the upper bounds, we were only able to establish one result better than the one from the proper setting;
namely, we prove an upper bound for the iUMc-coloring of planar graphs.
\begin{theorem}
	\label{thm:iumc}
	For every planar graph $G$, it holds
	$$
		\umc(G) \le 6\,.
	$$	
\end{theorem}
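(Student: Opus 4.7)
The plan is to apply the facial closure construction together with Theorem~\ref{thm:fac_um}. I would select an independent set $X \subseteq V(G)$, form the facial closure $\Phi_G(X)$ on vertex set $V(G)\setminus X$, apply Wendland's theorem to obtain a proper facial unique-maximum coloring $\sigma \colon V(G)\setminus X \to \{1,\dots,5\}$, and then extend $\sigma$ to $G$ by assigning color $6$ to every vertex of $X$. The resulting coloring is the candidate iUMc-coloring with $6$ colors.

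The verification of the iUMc property splits naturally by cases on the position of a vertex $v$. For $v \in X$, the color $6$ is the unique maximum in $N_G[v]$ since $X$ is independent. For $v \in V(G)\setminus X$ with exactly one neighbor in $X$, that neighbor is the unique vertex of color $6$ in $N_G[v]$, so $6$ is again the unique maximum. The two delicate cases are vertices in $V(G)\setminus X$ with no neighbor in $X$ (where the unique maximum must be extracted from the facial UM structure of $\Phi_G(X)$, since $N_G[v]$ then lies wholly in $V(G)\setminus X$ and, for such $v$, coincides with $N_{\Phi_G(X)}[v]$) and vertices with two or more $X$-neighbors (where color $6$ would appear more than once). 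Both must be ruled out or absorbed by a careful choice of $X$.

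The main obstacle is precisely this selection of $X$: a maximal or independent dominating set does not in general satisfy $|N_G(v) \cap X| \le 1$ for every $v \notin X$, and a true efficient dominating set (perfect code) need not exist in a planar graph. I would therefore attempt a more refined construction, for instance via a discharging argument based on the $5$-degeneracy of planar graphs, or by iteratively pruning vertices from $X$ that create conflicts and compensating by adding edges to $\Phi_G(X)$ while preserving planarity. As a fallback I would try an induction on $|V(G)|$: remove a vertex $v$ with $\deg_G(v)\le 5$, color $G-v$ with $6$ colors by the inductive hypothesis, and assign $v$ a color outside the set $\{\mu(u) \colon u\in N_G(v)\}$, of size at most $5$; the remaining difficulty is to argue that the chosen color also realises a unique maximum in $N_G[v]$ itself, which requires a finer case analysis based on whether $v$ can be placed above all its neighbors (e.g.\ by choosing color $6$ when no neighbor already carries it) or below a unique neighbor of already-maximal color.
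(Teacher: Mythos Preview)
You have the right ingredients---an independent dominating set, the facial closure, and Theorem~\ref{thm:fac_um}---but you have assigned the colors to the wrong side, and this is why you hit the obstruction that $X$ would need to be a perfect code. The paper's proof swaps the roles: take an independent dominating set $V_1$ (obtained greedily, so every vertex of $V_2 = V(G)\setminus V_1$ has a $V_1$-neighbor), give every vertex of $V_2$ the \emph{lowest} color~$1$, and color $V_1$ with the high colors $\{2,\dots,6\}$ via a facial unique-maximum coloring of $\Phi_G(V_2)$. Now a vertex $v\in V_1$ has $\sigma(v)\ge 2$ while all its neighbors lie in $V_2$ and carry color~$1$, so $\sigma(v)$ itself is the unique maximum in $N[v]$. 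A vertex $v\in V_2$ has $\sigma(v)=1$ and all its $V_2$-neighbors also have color~$1$, so the maximum in $N[v]$ is determined solely by its $V_1$-neighbors; these are precisely the vertices on the face of $\Phi_G(V_2)$ that replaces $v$, and Wendland's coloring guarantees a unique maximum there. No restriction like $|N_G(v)\cap V_1|\le 1$ is needed.

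Your proposal also contains a secondary gap in the ``no neighbor in $X$'' case: even if $N_G[v]\subseteq V(G)\setminus X$, this set is not a face of $\Phi_G(X)$, and a facial unique-maximum coloring says nothing about closed neighborhoods of vertices in the host graph. (Moreover, $N_{\Phi_G(X)}[v]$ is generally strictly larger than $N_G[v]$, since the closure adds edges.) So that branch would not go through as written. The inductive fallback you sketch is also incomplete: after deleting a $5^-$-vertex $v$ and coloring $G-v$, avoiding $\{\mu(u):u\in N(v)\}$ does keep the neighbors happy, but it does not by itself produce a unique maximum in $N[v]$, and there is no evident way to force this with only one spare color.
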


\begin{proof}
	By abusing the notation, we let $G$ represent also a fixed plane embedding of $G$.
	Color the vertices of $G$ properly with positive integers such that the lowest possible color is always assigned to a current vertex.
	In this way, we obtain a coloring $\sigma$ in which every vertex is either colored with $1$ or it is adjacent to a vertex of color $1$.
	
	Let $V_1$ be the set of vertices of $G$ colored by $1$ and $V_2 = V(G)\setminus V_1$.	
	Now, we will use the facial closure $\Phi_G(V_2)$.
	By Theorem~\ref{thm:fac_um}, it admits a facial unique-maximum coloring $\alpha$ using at most $5$ colors;
	we use the set of colors $\set{2,3,4,5,6}$.
	
	Next, recolor the vertices of $G$ by setting
	$\sigma(v) = \alpha(v)$ for every $v \in V_1$
	and $\sigma(v) = 1$ for every $v \in V_2$.
	We claim that the coloring $\sigma$ is an iUMc-coloring of $G$.	
	Indeed, for every vertex $v$ in $V_2$,
	there is a unique maximal color from $\set{2,\dots,6}$ in its neighborhood, 
	since it is adjacent to at least one vertex from $V_1$;
	The cases with $d(v) \le 2$ are trivial, and the case $d(v) \ge 3$ follows from the properties of $\Phi_G(V_2)$.
	For every vertex $v$ in $V_1$, all its neighbors have color $1$, and thus $\mu(v) = \sigma(v)$.
\end{proof}

\section{Proper conflict-free coloring with respect to open neighborhood}

We begin by discussing outerplanar graphs.
By Theorem~\ref{thm:pumo-out} and Observation~\ref{obs:cf-um}, we have the following.
\begin{corollary}
	\label{cor:pcfo-out}
	For every outerplanar graph $G$, it holds
	$$
		\pcf(G) \le 5\,.
	$$
\end{corollary}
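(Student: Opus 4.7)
The plan is to observe that this corollary is a direct consequence of two previously established facts, so essentially no new work is required. First I would invoke Theorem~\ref{thm:pumo-out}, which guarantees that every outerplanar graph $G$ admits a pUMo-coloring with at most $5$ colors, i.e., $\pum(G) \le 5$.

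Then I would apply Observation~\ref{obs:cf-um}, which records the trivial implication that every proper unique-maximum coloring with respect to open neighborhood is in particular a proper conflict-free coloring with respect to open neighborhood (since a color appearing uniquely as the maximum in some open neighborhood is by definition a color appearing exactly once in that neighborhood). This yields $\pcf(G) \le \pum(G)$.

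Chaining the two inequalities gives
\[
\pcf(G) \le \pum(G) \le 5,
\]
which is exactly the statement of the corollary. There is no genuine obstacle here; the only subtlety worth a one-line remark is that the open-neighborhood setting requires $G$ to have no isolated vertices, but this is precisely the standing convention already fixed in the introduction, so no additional hypothesis is needed.
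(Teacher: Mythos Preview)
Your proposal is correct and matches the paper's own argument exactly: the paper derives the corollary directly from Theorem~\ref{thm:pumo-out} together with Observation~\ref{obs:cf-um}, precisely the chain $\pcf(G)\le\pum(G)\le 5$ that you give.
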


The tightness of the bound is realized by the cycle $C_5$, 
since the two neighbors of every vertex must have distinct colors,
and the coloring must be proper. 
It means that we are coloring the square of $C_5$, which is $K_5$, and hence $\pcf(C_5) = 5$.
An analogous argument gives the following.
\begin{observation}
	\label{obs:pcfo-out}
	For every integer $n$, $n \ge 3$, it holds
	$$
		\pcf(C_n) \le 5
	$$
	and the upper bound is achieved only in the case $n = 5$.
\end{observation}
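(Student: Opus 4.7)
The plan is to reduce the problem to properly coloring the square $C_n^2$ of the cycle. First I would verify that a coloring $\sigma$ of $C_n$ (with vertices $v_0,\ldots,v_{n-1}$ indexed modulo $n$) is a pCFo-coloring precisely when any three consecutive vertices $v_{i-1}, v_i, v_{i+1}$ receive pairwise distinct colors: properness gives $\sigma(v_{i-1}) \neq \sigma(v_i)$ and $\sigma(v_i) \neq \sigma(v_{i+1})$, while the unique-color requirement in the two-element open neighborhood $N(v_i) = \{v_{i-1},v_{i+1}\}$ forces $\sigma(v_{i-1}) \neq \sigma(v_{i+1})$; conversely, such triple-distinctness trivially yields both conditions. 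This identifies $\pcf(C_n)$ with $\chi(C_n^2)$.

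Next I would handle the small cases directly. For $n \in \{3,4,5\}$, the square $C_n^2$ coincides with the complete graph $K_n$ (every pair of vertices is at cycle-distance at most $2$), so $\pcf(C_n) = n$. In particular, this reconfirms $\pcf(C_5) = 5$, while $\pcf(C_3) = 3$ and $\pcf(C_4) = 4$ both fall strictly below $5$.

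For $n \ge 6$, the graph $C_n^2$ is connected with maximum degree $4$, has $n \ge 6$ vertices, and contains $2n$ edges, so it is neither a complete graph nor an odd cycle; Brooks' theorem then yields $\chi(C_n^2) \le 4$, hence $\pcf(C_n) \le 4 < 5$. Combining the three cases gives $\pcf(C_n) \le 5$ for every $n \ge 3$ with equality exactly at $n=5$. I expect the only delicate point to be the equivalence between pCFo-colorings of $C_n$ and proper colorings of $C_n^2$; once this is set up, the remainder reduces to invoking Brooks' theorem and is essentially immediate.
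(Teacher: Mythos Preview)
Your proposal is correct and follows essentially the same approach as the paper: the paper also observes that a pCFo-coloring of $C_n$ is exactly a proper coloring of its square $C_n^2$, noting that $C_5^2=K_5$ and leaving the remaining cases to an ``analogous argument.'' Your use of Brooks' theorem for $n\ge 6$ is a clean way to make that analogous argument explicit, and your handling of $n\in\{3,4,5\}$ matches the paper's reasoning.
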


We continue with consideration of planar graphs.	
\begin{theorem}
	\label{thm:pcfo}
	For every planar graph $G$, it holds
	$$
		\pcf(G) \le 8\,.
	$$
\end{theorem}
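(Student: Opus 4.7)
The plan is to invoke Theorem~\ref{thm:fac_cf} twice, once on each ``side'' of a carefully chosen bipartition, and to place the two color palettes on disjoint ranges so that properness and conflict-freeness can be checked one side at a time.

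I would begin by fixing a maximal independent set $V_1 \subseteq V(G)$ and setting $V_2 = V(G) \setminus V_1$. Because $G$ has no isolated vertices, $V_1$ is dominating, and since $V_1$ is independent, every vertex of $V_1$ has all its neighbors in $V_2$ (in particular at least one). Then I would form the two plane facial closures $\Phi_G(V_1)$, whose vertex set is $V_2$, and $\Phi_G(V_2)$, whose vertex set is $V_1$. By Theorem~\ref{thm:fac_cf}, the graph $\Phi_G(V_1)$ admits a proper facial conflict-free coloring using the colors $\{1,2,3,4\}$, and $\Phi_G(V_2)$ admits such a coloring using $\{5,6,7,8\}$. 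Gluing these two partial colorings together produces a coloring $\sigma$ of $V(G)$ with $8$ colors.

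The verification splits into properness and CF. Properness is immediate across the bipartition because the color sets are disjoint; within $V_1$ there are no edges; and within $V_2$ every edge of $G$ is an edge of $\Phi_G(V_1)$, so it is properly colored. For CF, consider $v \in V_1$: since $N_G(v) \subseteq V_2$, only colors from $\{1,\dots,4\}$ appear in the open neighborhood, so I only need a unique color there. If $d(v) \ge 3$, then $v$ gives rise to a face of $\Phi_G(V_1)$ whose incident vertices are exactly $N_G(v)$, and the facial CF property supplies a uniquely colored boundary vertex. If $d(v) = 2$, the two neighbors are joined by an inserted edge in $\Phi_G(V_1)$, so they receive distinct colors and each is unique. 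If $d(v) = 1$, the single neighbor is trivially unique. A symmetric argument through $\Phi_G(V_2)$ handles $v \in V_2$, using that such $v$ has at least one neighbor in $V_1$. Palette disjointness ensures that a unique color located on one side of $N_G(v)$ remains unique across the whole open neighborhood.

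The main obstacle I anticipate is aligning the combinatorial structure of $\Phi_G(V_i)$ with $N_G(v)$ on the opposite side: one must confirm that the faces of $\Phi_G(V_1)$ associated with $V_1$-vertices really do have boundaries equal to the respective open neighborhoods in $G$, and the small-degree cases $d(v) \in \{1,2\}$ have to be dispatched separately via the edge-insertion clause of the closure definition. A minor sanity check, namely that $V_2 \neq \emptyset$ (equivalently, that $G$ has at least one edge), is also needed, but is automatic once $G$ has no isolated vertices and a nonempty edge set.
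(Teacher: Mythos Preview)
Your proposal is correct and follows essentially the same route as the paper's proof: pick an independent dominating set $V_1$ (the paper obtains it via a greedy proper coloring, you via a maximal independent set---these are equivalent), form the two facial closures, apply Theorem~\ref{thm:fac_cf} to each with disjoint four-color palettes, and verify properness and the conflict-free condition via the face/edge structure of the closures together with palette disjointness. The only cosmetic asymmetry worth flagging is that for $v\in V_2$ the relevant case split is on $|N_G(v)\cap V_1|$ rather than on $d(v)$, but your closing remark on palette disjointness shows you already handle this.
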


\begin{proof}
	By abusing the notation, we let $G$ represent also a fixed plane embedding of $G$.
	Color the vertices of $G$ properly with positive integers such that the lowest possible color is always assigned to a current vertex.
	In this way, we obtain a coloring in which every vertex is either colored with $1$ or it is adjacent to a vertex of color $1$.
	
	Let $V_1$ be the set of vertices of $G$ colored by $1$ and $V_2 = V(G)\setminus V_1$.	
	We will use the facial closures $\Phi_G(V_1)$ and $\Phi_G(V_2)$.
	By Theorem~\ref{thm:fac_cf}, they admit facial conflict-free colorings $\alpha_1$ and $\alpha_2$, respectively, using at most $4$ colors;
	we use two distinct sets of at most $4$ colors for them, 
	say, $\set{1,2,3,4}$ for $\Phi_G(V_1)$ and $\set{5,6,7,8}$ for $\Phi_G(V_2)$.
	
	Now, recolor the vertices of $G$ to obtain a coloring $\sigma$ by setting
	$\sigma(v) = \alpha_2(v)$ for every $v \in V_1$ and
	$\sigma(v) = \alpha_1(v)$ for every $v \in V_2$.
	We claim that the coloring $\sigma$ is a pCFo-coloring of $G$.
	Indeed, since the colorings $\alpha_1$ and $\alpha_2$ use distinct colors, 
	and they preserve adjacencies within the sets $V_1$ and $V_2$, it follows that $\sigma$ is a proper coloring.
	Next, for every vertex $v$ in $V_i$, $i \in \set{1,2}$,
	there is a unique color in its neighborhood;
	namely, 
	if $d(v) = 1$, then there is only one color in $N(v)$,
	if $d(v) = 2$, then the two neighbors are either adjacent in $\Phi_G(V_{3-i})$ 
	or already in $G$, in both cases they receive distinct colors,
	and finally, 
	if $d(v) \ge 3$, then there is a unique color in $N(v)$ representing a unique color for the face in $\Phi_G(V_{3-i})$
	comprised by the neighbors of $v$.
\end{proof}

Let us mention here that in the proof we use an idea similar to the one used by Abel et al.~\cite{AbeETAL17}
to prove an existence of an iCFo-coloring of planar graphs using at most $8$ colors. 
The crucial difference is in using the notion of facial closure and results for the facial version of conflict-free coloring.
We also note that the same bound of $8$ colors follows from the proof of Theorem~28 in~\cite{BhyKalMat22}, 
by applying the Four Color Theorem to the vertices of $V_2$.

We do not believe that the upper bound of $8$ colors is tight. 
But, as the example below shows, there are planar graphs that need at least $6$ colors for a pCFo-coloring.

\begin{proposition}
	\label{prop:pcfo}
	The planar graph $H_{\mathrm{pCFo}}$ has $\pcf(H_{\mathrm{pCFo}}) = 6$.
\end{proposition}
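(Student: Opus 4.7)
The plan is to establish the equality by two inequalities, with the upper bound being a routine explicit construction and the lower bound being the main content. Guided by the analogous arguments used for Propositions~\ref{prop:iumo-out} and~\ref{prop:iumo}, I expect the graph $H_{\mathrm{pCFo}}$ to consist of five ``central'' vertices, say $x_1,\dots,x_5$, joined pairwise by paths of length two (i.e., by common $2$-neighbors) together with several additional $2$-vertices attached to appropriate pairs so that the conflict-free requirement in their open neighborhoods cascades into a coloring constraint among the $x_i$'s. The upper bound $\pcf(H_{\mathrm{pCFo}}) \le 6$ will be settled by exhibiting an explicit proper coloring that uses six colors and verifying, case by case, that every open neighborhood contains a color realized exactly once; this is a mechanical check given the picture.

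For the lower bound $\pcf(H_{\mathrm{pCFo}}) \ge 6$, I would proceed by contradiction and suppose there is a pCFo-coloring $\sigma$ using at most five colors. The first step is to exploit each $2$-vertex $v$ between two centrals $x_i$ and $x_j$: since the open neighborhood $N(v) = \{x_i, x_j\}$ must contain a unique color and $\sigma$ must be proper, we immediately obtain $\sigma(x_i) \ne \sigma(x_j)$. Iterating this observation over all pairs that are linked by a common $2$-neighbor, I would conclude that $\sigma(x_1),\dots,\sigma(x_5)$ are pairwise distinct and hence $\{\sigma(x_1),\dots,\sigma(x_5)\} = \{1,2,3,4,5\}$.

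The second step is to derive a contradiction at some other vertex, analogously to the closing paragraphs of Propositions~\ref{prop:iumo-out} and~\ref{prop:iumo}. The argument should isolate a vertex $w$ (likely one of the $x_i$ itself, or a vertex adjacent to several of them) whose open neighborhood contains, because of the forced palette on the $x_i$'s, every color of $\{1,\dots,5\}$ at least twice, or contains the color $\sigma(w)$ once in addition to every other color appearing twice, so that no color is unique in $N(w)$. Depending on the configuration, this will split into two subcases: either $w$ already sees two neighbors of each of four colors and the remaining color coincides with $\sigma(w)$ (violating properness), or the unique-color obligation at $w$ forces a recoloring of some $x_i$ that contradicts properness with the central vertices it is attached to. Either way, a contradiction is reached, completing $\pcf(H_{\mathrm{pCFo}}) \ge 6$.

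The main obstacle I anticipate is identifying precisely the vertex that delivers the contradiction and arranging the case analysis concisely: the structural fact that five centrals must be rainbow-colored is essentially automatic from the subdivision gadget, but packing enough additional $2$-vertices around the centrals so that \emph{every} assignment of the five colors forces a conflict-free violation at some extra vertex is exactly what pins down the tight bound $6$. The verification amounts to checking, by symmetry of the labeling, a small number of representative cases of how the five colors can be distributed among the centrals and where the forbidden coincidence occurs.
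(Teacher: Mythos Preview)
Your proposal rests on a guessed structure for $H_{\mathrm{pCFo}}$ that does not match the actual graph, and as a result the argument you sketch would not go through. The graph is not built from five ``central'' vertices pairwise joined by subdivided edges; rather, it is assembled from four copies of a gadget $H$ containing a $K_4$ on vertices $x_0,x_1,x_2,x_3$ together with vertices $y_1,y_2,y_3$ adjacent to $x_0$ and to pairs among $x_1,x_2,x_3$. The four copies are glued so that the vertices playing the roles of $x_1,x_2,x_3$ are shared between copies (each shared vertex lies in exactly two copies), giving seven such vertices in total, while each copy has its own private $x_0$.

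The paper's lower-bound argument is then a pigeonhole count, not a direct ``rainbow on five centrals'' forcing. In a single copy of $H$, the $K_4$ forces four distinct colors on $x_0,x_1,x_2,x_3$; for $x_0$ to have a unique color in its open neighborhood, some $y_i$ must receive the fifth color, and this in turn makes at least two of $x_1,x_2,x_3$ see four distinct colors in their open neighborhoods within that copy. Summing over the four copies gives at least eight incidences of ``sees four distinct colors'' distributed among only seven shared vertices, so some shared vertex experiences this in both of its copies; since the two copies contribute disjoint neighbors, that vertex has every color of $\{1,\dots,5\}\setminus\{\sigma(v)\}$ appearing at least twice in $N(v)$, and properness rules out $\sigma(v)$ itself, yielding the contradiction. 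Your outline lacks this gadget-and-gluing structure and the associated double-counting step, which is the actual mechanism that forces the sixth color.
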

	
\begin{figure}[htp!]
	\centering
	\includegraphics{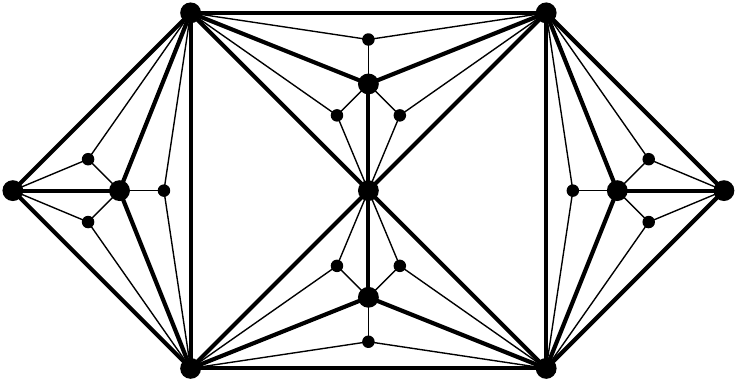}
	\caption{The planar graph $H_{\mathrm{pCFo}}$ with $\pcf(H_{\mathrm{pCFo}}) = 6$.}
	\label{fig:pcfo6}
\end{figure}
	
\begin{proof}
	Suppose to the contrary that $H_{\mathrm{pCFo}}$ admits a pCFo-coloring $\sigma$ with at most $5$ colors.
	Consider first the configuration $H$ depicted in Figure~\ref{fig:pcfo6-sub}.
	\begin{figure}[htp!]
		\centering
		\includegraphics{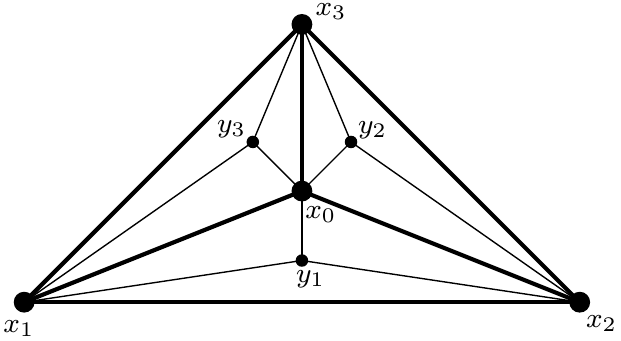}
		\caption{A subgraph $H$ of $H_{\mathrm{pCFo}}$.}
		\label{fig:pcfo6-sub}
	\end{figure}	
	The vertices $x_0$, $x_1$, $x_2$, and $x_3$ receive $4$ distinct colors, say $1$, $2$, $3$, and $4$, respectively.
	It follows that at least one of the vertices $y_1$, $y_2$, and $y_3$ is colored with $5$, 
	otherwise $x_0$ does not have a unique color in its neighborhood.
	This means that at least two of the vertices $x_1$, $x_2$, and $x_3$ have $4$ 
	distinct colors in their open neighborhoods.

	Now, note that $H_{\mathrm{pCFo}}$ is comprised of four copies of $H$,
	with five vertices, not corresponding to $x_0$, identified in such a way
	that every identified vertex belongs to two copies of $H$.
	Altogether, in $H_{\mathrm{pCFo}}$ there are seven vertices corresponding to at least one of $x_1$, $x_2$, and $x_3$.
	Since in each copy of $H$ at least two of them see $4$ distinct colors in their open neighborhoods,
	at least one of the vertices sees all $4$ colors at least twice, a contradiction.	
	
	It remains to prove that $6$ colors are indeed sufficient.
	We leave this verification to the reader.
\end{proof}

\section{Proper conflict-free coloring with respect to closed neighborhood}

The proper conflict-free coloring of a graph $G$ with respect to closed neighborhood is equivalent 
to a proper coloring of $G$, since each vertex is colored with a color different from the 
colors on the neighbors of the vertex.
\begin{observation}
	\label{obs:pcfc}
	For every graph $G$, it holds
	$$
		\pcfc(G) = \chi(G)\,.
	$$
\end{observation}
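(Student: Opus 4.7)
The plan is to prove the two inequalities $\pcfc(G) \le \chi(G)$ and $\chi(G) \le \pcfc(G)$ separately, both of which are essentially immediate from the definitions. There is no real obstacle here; the statement is a direct observation once one notices that properness alone forces a unique color in every closed neighborhood.

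For the upper bound $\pcfc(G) \le \chi(G)$, I would take an optimal proper coloring $\sigma$ of $G$ with exactly $\chi(G)$ colors and verify that $\sigma$ is already a pCFc-coloring. Fix any vertex $v$. Since $\sigma$ is proper, we have $\sigma(v) \neq \sigma(u)$ for every neighbor $u$ of $v$, and therefore the color $\sigma(v)$ appears on exactly one vertex (namely $v$ itself) in the closed neighborhood $N[v]$. Thus $\sigma(v)$ witnesses the conflict-free condition at $v$, so $\sigma$ is a pCFc-coloring using $\chi(G)$ colors.

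For the lower bound $\chi(G) \le \pcfc(G)$, I would simply invoke the definition: a pCFc-coloring is by definition a proper coloring of $G$, so any such coloring uses at least $\chi(G)$ colors. Combining the two inequalities yields the claimed equality, and the observation follows.
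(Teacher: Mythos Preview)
Your proof is correct and takes essentially the same approach as the paper: the paper simply observes that any proper coloring is already a pCFc-coloring because each vertex's own color is distinct from all neighbors' colors and hence unique in its closed neighborhood, while the converse inequality is immediate from the definition. You have written this out in slightly more detail, but the argument is identical.
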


Planar graphs thus require at most $4$ colors by the Four Color Theorem with, e.g., 
$K_4$ having $\pcfc(K_4) = 4$. 
Outerplanar graphs are $2$-degenerate and hence properly $3$-colorable with, e.g., 
$C_3$ having $\pcfc(C_3) = 3$.

\section{Unique-maximum coloring of plane graphs with respect to open neighborhood}

The unique-maximum colorings require a more rigorous approach.
We again begin with bounds for outerplanar graphs.
\begin{theorem}
	\label{thm:pumo-out}
	For every outerplanar graph $G$, it holds
	$$
		\pum(G) \le 5\,.
	$$
\end{theorem}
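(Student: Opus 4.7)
The plan is to proceed by induction on $|V(G)|$, handling small graphs (say $|V(G)| \le 5$) directly and, for larger $G$, identifying a reducible configuration, applying the inductive hypothesis to the resulting smaller outerplanar graph, and extending the resulting pUMo-coloring back to $G$.

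The easier cases are the following. If $G$ has a vertex $v$ of degree $1$ with neighbor $u$, I remove $v$, apply IH to $G - v$, and pick $\sigma(v)$ avoiding the two colors $\sigma(u)$ and $\mu_{G-v}(u)$; then $v$ has a trivial unique maximum, and the unique maximum at $u$ is either preserved (if $\sigma(v) < \mu_{G-v}(u)$) or replaced by $\sigma(v)$. If instead $G$ has minimum degree $2$ and contains a $2$-vertex $v$ incident with a triangle $u_1 v u_2$, I remove $v$, apply IH to $G - v$, and choose $\sigma(v)$ avoiding the at most four colors $\sigma(u_1), \sigma(u_2), \mu_{G-v}(u_1), \mu_{G-v}(u_2)$; the edge $u_1 u_2$ guarantees $\sigma(u_1) \ne \sigma(u_2)$, so $v$ automatically has a unique maximum in its open neighborhood. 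In both cases at least one of the five colors is admissible.

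The hard case is when $G$ has minimum degree $2$ and no $2$-vertex lies on a triangle. By the contrapositive of Proposition~\ref{prop:outer-2trian}, some $2$-vertex $v_1$ has a $2$-vertex neighbor $v_2$; let $u$ and $w$ be their respective other neighbors, and note that $u \ne w$, since $u = w$ would make $v_1$ a $2$-vertex incident with the triangle $u v_1 v_2$. I form $G'$ from $G - \{v_1, v_2\}$ by adding the edge $uw$ if it is not already in $G$; this preserves outerplanarity because $v_1, v_2$ lie on the outer face and the new edge can be routed through the vacated region. Applying IH to $G'$ yields a pUMo-coloring with $\sigma(u) \ne \sigma(w)$. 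The constraints on $\sigma(v_1), \sigma(v_2)$ are: each must avoid $\sigma(u)$ and $\sigma(w)$ (for properness of $v_1 u$, $v_2 w$ and for the other $v_i$'s unique-maximum requirement, since $N_G(v_i) = \{u \text{ or } w, v_{3-i}\}$), the two colors must differ from each other, and each must avoid at most one further color to preserve the unique maximum at $u$ (respectively $w$). A short pigeonhole argument then shows that at least two admissible values remain for each $\sigma(v_i)$ within $\{1, \dots, 5\} \setminus \{\sigma(u), \sigma(w)\}$, and a distinct pair can always be selected.

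The main obstacle is the bookkeeping in this last case: one must treat separately the subcases $uw \in E(G)$ (in which $G'$ differs from $G$ only by the removal of $\{v_1, v_2\}$, so $u$ simply gains $v_1$ as a new neighbor) and $uw \notin E(G)$ (in which the transition from $G'$ to $G$ both removes $w$ from $N(u)$ and adds $v_1$ in its place), and identify in each subcase the precise forbidden value for $\sigma(v_1)$ (and symmetrically for $\sigma(v_2)$) that is needed to ensure the unique maximum at $u$ remains unique in $N_G(u)$.
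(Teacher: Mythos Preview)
Your overall strategy matches the paper's, but the added edge $uw$ in the hard case is precisely where the argument breaks. When $uw\notin E(G)$, passing from $G'$ back to $G$ does not merely add $v_1$ to $N(u)$: it simultaneously \emph{removes} $w$. If $\sigma(w)$ happens to be the unique maximum in $N_{G'}(u)$, then $N_G(u)\setminus\{v_1\}=N_{G'}(u)\setminus\{w\}$ may already have its maximum colour repeated, and no single ``forbidden value'' for $\sigma(v_1)$ can repair this---you would need $\sigma(v_1)$ to exceed every colour appearing there, which can be incompatible with the constraints $\sigma(v_1)\notin\{\sigma(u),\sigma(w)\}$.

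Concretely, take $G$ to be the $9$-cycle $u\,v_1\,v_2\,w\,x_1\,b\,x_2\,x_3\,a$ together with the single chord $ub$. This $G$ is outerplanar, has minimum degree $2$, and contains no triangle at all. Your $G'$ is the $7$-cycle $u\,w\,x_1\,b\,x_2\,x_3\,a$ with chord $ub$, and the assignment
\[
\sigma(u)=4,\ \sigma(w)=5,\ \sigma(x_1)=1,\ \sigma(b)=3,\ \sigma(x_2)=5,\ \sigma(x_3)=2,\ \sigma(a)=3
\]
is a valid pUMo-colouring of $G'$ (check: every open neighbourhood has a unique maximum). Now $N_G(u)=\{v_1,a,b\}$ with $\sigma(a)=\sigma(b)=3$, and since $\sigma(v_1)$ must avoid $\sigma(u)=4$ and $\sigma(w)=5$, every remaining choice $\sigma(v_1)\in\{1,2,3\}$ leaves the maximum colour $3$ in $N_G(u)$ non-unique. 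So your extension step fails for this particular colouring returned by the inductive hypothesis.

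The paper sidesteps this entirely by \emph{not} adding the edge: it sets $G'=G-\{v_1,v_2\}$ and nothing more. Then $N_G(u)=N_{G'}(u)\cup\{v_1\}$ is obtained by pure insertion, so $\mu_{G'}(u)$ is well defined and forbidding that one colour for $\sigma(v_1)$ always suffices. The inequality $\sigma(u)\ne\sigma(w)$ that you purchased with the extra edge is not needed: the paper simply lists $\sigma(u)$, $\mu_{G'}(u)$, $\sigma(w)$ as (possibly coinciding) forbidden values for $\sigma(v_1)$, then $\sigma(v_1)$, $\sigma(u)$, $\sigma(w)$, $\mu_{G'}(w)$ for $\sigma(v_2)$, and the counts $5-3\ge 2$ and $5-4\ge 1$ still leave room. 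Dropping the edge addition fixes your proof.
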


\begin{proof}
	We prove the theorem by contradiction.
	Suppose that $G$ is an outerplanar graph with the minimum number of vertices 
	which does not admit a pUMo-coloring with at most $5$ colors.
	By the $2$-degeneracy of outerplanar graphs, there is a $2^{-}$-vertex $v$ in $G$.	
	
	In the case of $d(v) = 1$,	
	consider the graph $G' = G \setminus \set{v}$.
	By the minimality, there is a pUMo-coloring $\sigma$ of $G'$ with at most $5$ colors.	
	The coloring $\sigma$ induces a partial pUMo-coloring of $G$ with only $v$ being non-colored.	
	Let $v_1$ be the neighbor of $v$.
	We color $v$ with a color distinct $\sigma(v_1)$ and $\mu(v_1)$.
	We have at least $3$ available colors, and coloring $v$ with any of them completes $\sigma$, 
	since the coloring is clearly proper and $v_1$ has a unique-maximum: either the color of $v$ or the unique-maximum in $G'$.
	
	Hence, we may assume that the minimum degree of $G$ is $2$.
	Suppose now that there are two adjacent $2$-vertices $v$ and $w$ in $G$.
	Consider the graph $G' = G \setminus \set{v,w}$ and its pUMo-coloring $\sigma$ of $G'$ with at most $5$ colors.
	Again, $\sigma$ induces a partial pUMo-coloring of $G$, now with $v$ and $w$ being non-colored.
	Let $v_1$ be the neighbor of $v$ distinct from $w$, 
	and $w_1$ be the neighbor of $w$ distinct from $v$.	
	We color $v$ with a color distinct from $\sigma(v_1)$, $\mu(v_1)$, and $\sigma(w_1)$.
	There are at least two such colors.
	Then, we color $w$ with a color distinct from $\sigma(v)$, $\sigma(v_1)$, $\sigma(w_1)$, and $\mu(w_1)$.
	There is at least one such color.
	Regardless of the choice, $v$ will always have a unique maximum color in its open neighborhood, 
	since the two colors in it are distinct, 
	and similarly $w$ has only distinct colors in its open neighborhood.
	Note that also the vertices $v_1$ and $w_1$ either retain their original unique maximum colors,
	or the color of $v$ and $w$ becomes the new unique maximum color, respectively.	
	
	Therefore, there are no adjacent $2$-vertices in $G$.
	By Proposition~\ref{prop:outer-2trian}, there is a $2$-vertex $v$ incident with a $3$-cycle $vv_1v_2$ in $G$.	
	Consider the graph $G' = G \setminus \set{v}$ and its pUMo-coloring $\sigma$ of $G'$ with at most $5$ colors.
	As above, $\sigma$ induces a partial pUMo-coloring of $G$, now with $v$ being non-colored.
	We color $v$ with a color distinct from $\sigma(v_1)$, $\sigma(v_2)$, $\mu(v_1)$, and $\mu(v_2)$.
	There is at least one such color, and regardless of the choice, 
	$v$ will always have a unique maximum color in its open neighborhood, 
	since both colors in it are distinct.
	Moreover, $v_1$ and $v_2$ either retain their original unique maximum colors,
	or the color of $v$ becomes their new unique maximum color.	
\end{proof}	

The upper bound of $5$ colors for outerplanar graphs is tight, e.g., $\pum(C_5) = 5$; 
it follows from Observations~\ref{obs:cf-um} and~\ref{obs:pcfo-out}.

We continue by establishing bounds for planar graphs.
\begin{theorem}
	\label{thm:pumo}
	For every planar graph $G$, it holds
	$$
		\pum(G) \le 10\,.
	$$
\end{theorem}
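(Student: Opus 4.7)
The plan is to mirror the facial--closure strategy used in the proof of Theorem~\ref{thm:pcfo}, but with facial unique--maximum colorings (Theorem~\ref{thm:fac_um}) replacing the facial conflict--free colorings. Since a facial unique--maximum coloring uses up to $5$ colors and the machinery will be invoked twice on disjoint color palettes, the total count will be $5+5=10$.

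First I would fix a plane embedding of $G$ and greedily construct a proper coloring that assigns the smallest available positive integer to each vertex; let $V_1$ be the (independent) set of vertices receiving color $1$, and set $V_2=V(G)\setminus V_1$. By construction, every vertex of $V_2$ has at least one neighbor in $V_1$. I would then form the two facial closures $\Phi_G(V_1)$ (a plane graph on vertex set $V_2$) and $\Phi_G(V_2)$ (a plane graph on vertex set $V_1$) and, invoking Theorem~\ref{thm:fac_um}, equip them with proper facial unique--maximum colorings $\alpha_1$ and $\alpha_2$ using the palettes $\{1,\dots,5\}$ and $\{6,\dots,10\}$ respectively. The final coloring $\sigma$ is defined by $\sigma(v)=\alpha_1(v)$ for $v\in V_2$ and $\sigma(v)=\alpha_2(v)$ for $v\in V_1$.

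It then remains to verify that $\sigma$ is a pUMo--coloring. Properness is immediate: edges inside $V_2$ already live in $\Phi_G(V_1)$ and so are coloured properly by $\alpha_1$, the set $V_1$ is independent, and $V_1$--$V_2$ edges receive colors from disjoint palettes. For the unique--maximum condition I would split into two cases. If $v\in V_1$, then $N(v)\subseteq V_2$ carries colors from $\{1,\dots,5\}$, and the facial closure places $N(v)$ on a common face of $\Phi_G(V_1)$ (or joins two neighbors by an edge when $d(v)=2$, or reduces trivially when $d(v)=1$), so $\alpha_1$ delivers a unique maximum inside $N(v)$. If $v\in V_2$, then its $V_1$--neighbors exist and receive colors in $\{6,\dots,10\}$ while any $V_2$--neighbors receive smaller colors, so the maximum of $\sigma$ on $N(v)$ must be attained among the $V_1$--neighbors; these form the face (or edge or singleton) of $\Phi_G(V_2)$ associated to $v$, and $\alpha_2$ supplies the required uniqueness.

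The main conceptual point to be careful about---and the principal obstacle---is the assignment of palettes: the set on which the facial unique--maximum coloring is installed must receive the \emph{higher} block of colors precisely when it is the set of potential ``suppliers'' of the maximum in neighborhoods of the opposite set. The reverse palette assignment would collapse at vertices $v\in V_2$, because the $V_2$--neighbors of $v$ need not correspond to any face in either closure. Beyond this, the only bookkeeping left is the degenerate cases $d(v)\in\{1,2\}$, which are already absorbed by the convention that facial closures add an edge between a pair of surviving neighbors of a removed vertex of degree two, guaranteeing distinct colors and hence a unique maximum among two colored neighbors.
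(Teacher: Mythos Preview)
Your proposal is correct and follows essentially the same approach as the paper's proof: the same greedy partition $V_1,V_2$, the same two facial closures, the same invocation of Theorem~\ref{thm:fac_um} on each with disjoint palettes $\{1,\dots,5\}$ and $\{6,\dots,10\}$, and the same case analysis for the unique-maximum check. Your additional remark explaining why the higher palette must go to $V_1$ is a nice clarification that the paper leaves implicit.
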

	
\begin{proof}
	By abusing the notation, we let $G$ represent also a fixed plane embedding of $G$.
	Color the vertices of $G$ properly with positive integers such that the lowest possible color is always assigned to a current vertex.
	In this way, we obtain a coloring in which every vertex is either colored with $1$ or it is adjacent to a vertex of color $1$.
	
	Let $V_1$ be the set of vertices of $G$ colored by $1$ and $V_2 = V(G)\setminus V_1$.	
	Now, we will use the facial closures $\Phi_G(V_1)$ and $\Phi_G(V_2)$.
	By Theorem~\ref{thm:fac_um}, they admit facial unique-maximum colorings $\alpha_1$ and $\alpha_2$, respectively, using at most $5$ colors;
	we use two distinct sets of at most $5$ colors for them; 
	namely, $\set{1,2,3,4,5}$ for $\Phi_G(V_1)$ and $\set{6,7,8,9,10}$ for $\Phi_G(V_2)$.
	
	Now, recolor the vertices of $G$ to obtain a coloring $\sigma$ by setting
	$\sigma(v) = \alpha_2(v)$ for every $v \in V_1$ and
	$\sigma(v) = \alpha_1(v)$ for every $v \in V_2$.
	We claim that the coloring $\sigma$ is a proper unique-maximum coloring of $G$.
	Indeed, since the colorings $\alpha_1$ and $\alpha_2$ use distinct colors, 
	and they preserve adjacencies within the sets $V_1$ and $V_2$, it follows that $\sigma$ is a proper coloring.
	Next, for every vertex $v$ in $V_2$,
	there is a unique maximal color from $\set{6,\dots,10}$ in its neighborhood, 
	since it is adjacent to at least one vertex from $V_1$;
	The cases with $d(v) \le 2$ are again trivial, and the case $d(v) \ge 3$ follows from the properties of $\Phi_G(V_2)$.
	For every vertex $v$ in $V_1$, there is no color from $\set{6,\dots,10}$ in its neighborhood 
	and therefore, its unique maximal color is guaranteed by the properties of $\Phi_G(V_1)$.
\end{proof}

The lower bound for the pUMo-chromatic number is at least $6$ as follows already from Proposition~\ref{prop:pcfo} and Observation~\ref{obs:cf-um}.
However, we present another, well-known graph on just $9$ vertices that attains this bound.

\begin{proposition}
	\label{prop:pumo}
	The Fritsch graph $G$ has $\pum(G) = 6$.
\end{proposition}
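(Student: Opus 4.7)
The plan is to establish the two inequalities $\pum(G) \le 6$ and $\pum(G) \ge 6$ separately.

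For the upper bound, I would exhibit an explicit pUMo-coloring of the nine vertices of the Fritsch graph with colors from $\{1,\dots,6\}$, and then verify by direct inspection of the nine open neighborhoods that each has a unique maximum color. Since $G$ has only nine vertices, this verification is finite and routine; finding a valid coloring can be guided by placing the highest colors on low-degree vertices (so that their appearance as a unique maximum is not ``overbooked''), and by exploiting the automorphisms of the Fritsch graph to shrink the search.

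For the lower bound, I would assume towards a contradiction that $\sigma$ is a pUMo-coloring of $G$ using at most $5$ colors. Since $\sigma$ is proper and $\chi(G) = 4$, the coloring $\sigma$ uses four or five colors, and the former case is immediately ruled out because then, for every vertex $v$, the unique maximum in $N(v)$ would have to be realized with colors from $\{1,2,3,4\}$, and the triangulated structure of $G$ together with $\chi(G) = 4$ would force $K_4$-subgraphs whose vertices cannot simultaneously satisfy the unique-max condition. So assume all five colors appear. Let $V_c = \sigma^{-1}(c)$. For every vertex $v \notin V_5$, either $v$ has exactly one neighbor in $V_5$, or $v$ has none and the unique-max condition on $N(v)$ must be witnessed by a color in $\{1,2,3,4\}$; an analogous condition cascades for $V_4, V_3, V_2$. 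The idea is to enumerate, up to the automorphism group of $G$, the feasible positions of $V_5$, and in each case use the triangulated structure (twenty-one edges, all faces triangular except the outer one) to derive incidence constraints that cannot be met.

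The main obstacle is the bookkeeping of the case analysis: once $V_5$ is fixed up to symmetry, the placement of $V_4$ inside $V(G) \setminus V_5$ is where most of the casework lives. The key simplification I would lean on is the automorphism group of the Fritsch graph, which lets me fix the color class $V_5$ (and often $V_4$) on canonical orbits at the outset. A useful auxiliary observation is that every vertex $v$ whose color is not the maximum appearing in $N[v]$ must see exactly one neighbor of its own maximum color, which, combined with the degree sequence of $G$, makes the feasible colorings extremely scarce. If a purely combinatorial case analysis becomes too cumbersome to present cleanly, a brute-force enumeration over the finitely many proper five-colorings of the Fritsch graph (up to automorphism) provides a tractable fallback, since the pUMo condition can be checked in linear time per coloring.
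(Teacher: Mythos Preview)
Your overall strategy---exhibit a $6$-coloring for the upper bound, and for the lower bound fix the position of the top color class up to automorphism and then force a contradiction---is the same as the paper's. However, two points in your plan are genuine gaps rather than mere bookkeeping.

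First, your dismissal of the four-color case is not valid as written. A $K_4$ properly colored with $1,2,3,4$ does satisfy the unique-maximum condition in every open neighborhood (each vertex sees three distinct colors), so ``$K_4$-subgraphs whose vertices cannot simultaneously satisfy the unique-max condition'' is false. The paper handles this much more cleanly: if only four colors are used, relabel them as $2,3,4,5$; then the case analysis for ``where is the vertex of color $5$'' applies verbatim. No separate argument for four colors is needed.

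Second, you are missing the observation that drives the paper's proof: the Fritsch graph has diameter $2$. Hence any two vertices of color $5$ would be adjacent (impossible, since $\sigma$ is proper) or share a common neighbor (impossible, since that neighbor would see the maximum color $5$ twice). So $|V_5|\le 1$, and by the relabeling trick above, $|V_5|=1$. The automorphism group of $G$ has exactly two vertex orbits, so the entire lower-bound argument reduces to two short cases, each of which the paper dispatches in a paragraph by propagating forced colors around the triangles. Without the diameter-$2$ observation you would be enumerating all independent sets for $V_5$ up to symmetry, and then the placement of $V_4$---which you correctly flag as the bottleneck---balloons unnecessarily. Your brute-force fallback is of course sound, but the intended argument is fully by hand once $|V_5|=1$ is established.
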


\begin{figure}[htp!]
	$$
		\includegraphics{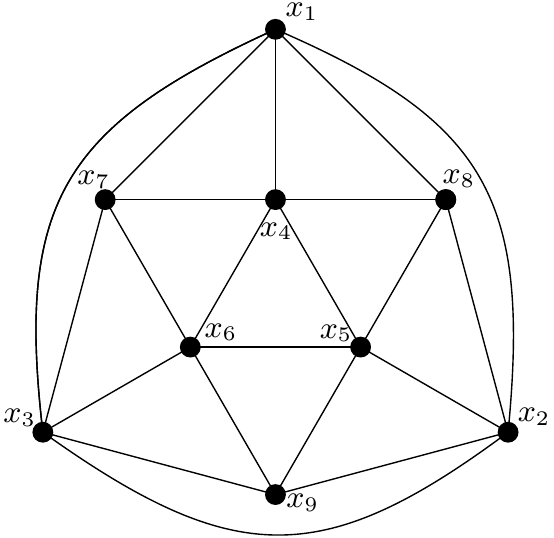}
	$$
	\caption{The Fritsch $G$ graph with $\pum(G) = 6$.}
	\label{fig:fritsch}
\end{figure}

\begin{proof}
	Let $G$ be the Fritsch graph depicted in Figure~\ref{fig:fritsch}.
	Suppose to the contrary that $G$ has a pUMo-coloring $\sigma$ with $5$ colors.
	
	First, since $G$ has diameter $2$, there is at most one vertex of color $5$.
	Moreover, we may assume that there is a vertex $x$ such that $\sigma(x) = 5$,
	otherwise we just omit one of the lower colors.	
	Let $\set{a,b,c,d} = \set{1,2,3,4}$.
	Note that due to the symmetry, there are two possibilities for $x$.
	We consider the two cases separately.	
	
	\medskip
	\noindent \textbf{Case 1:}	
	Suppose first that $x = x_1$, i.e., $\sigma(x_1) = 5$. \quad
	Let $\sigma(x_5) = a$, $\sigma(x_6) = b$, and $\sigma(x_9) = c$.
	Then $\sigma(x_2) \in \set{b,d}$ and $\sigma(x_3) \in \set{a,d}$.
	Since $\sigma(x_2) \neq \sigma(x_3)$ and by symmetry, we may assume, without loss of generality, that $\sigma(x_3) = a$.
	It follows that $\sigma(x_2) = d$, otherwise $x_9$ does not have a unique maximum color in its open neighborhood.
	Moreover, $a < \mu(x_9) \in \set{b,d}$.
	Next, observe that $\set{\sigma(x_4), \sigma(x_7)} = \set{c,d}$.
	Consequently, $\mu(x_6) = d$ and $d > a$, $d > c$.
	But now, since $d$ appears twice in the open neighborhood of $x_1$, we have that $\mu(x_1) > d$
	and thus $\mu(x_1) = \sigma(x_8) = b$. This means that $x_5$ does not have a unique maximum color in its open neighborhood,
	a contradiction.

	\medskip
	\noindent \textbf{Case 2:}
	Suppose now that $x = x_9$, i.e., $\sigma(x_9) = 5$. \quad
	Let $\sigma(x_4) = d$ and consequently, without loss of generality,
	we may assume that in $N(x_4)$ each of the colors $a$ and $b$ appears twice, and color $c$ appears once.
	Therefore $c > a$ and $c > b$.
	Observe that $\sigma(x_1) \neq c$, otherwise $\sigma(x_2) = \sigma(x_3) = d$, which is not possible as they are adjacent.
	So, without loss of generality, we may assume that $\sigma(x_1) = \sigma(x_5) = a$, 
	$\sigma(x_8) = b$, $\sigma(x_3) = d$, and $\sigma(x_2) = c$.
	Now, if $\sigma(x_7) = c$, then $x_1$ does not have a unique maximum color in its open neighborhood, since $b < c$.
	Therefore, $\sigma(x_7) = b$ and $\sigma(x_6) = c$.	
	Then, $\mu(x_7) = c$, meaning that $c > d$. 	
	This means that $x_9$ does not have a unique maximum color in its open neighborhood,
	a contradiction.
	
	\medskip
	It is easy to observe that $G$ admits a pUMo-coloring with $6$ colors.
\end{proof}

\section{Proper unique-maximum coloring with respect to closed neighborhood}
	
For this invariant, we also establish a tight upper bound for the outerplanar graphs.
From Theorem~\ref{thm:pumo-out}, using Proposition~\ref{prop:cl-op}, 
we immediately infer the following.
	
\begin{corollary}
	\label{cor:pumc-out}
	For every outerplanar graph $G$, it holds 
	$$
		\pumc(G) \le 5 \,.
	$$
\end{corollary}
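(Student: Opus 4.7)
The plan is to derive the bound as an immediate consequence of two results already established earlier in the paper, so very little work is needed beyond citing them in the correct order. First I would recall Theorem~\ref{thm:pumo-out}, which guarantees that every outerplanar graph $G$ admits a pUMo-coloring using at most $5$ colors, that is $\pum(G) \le 5$. Then I would invoke Proposition~\ref{prop:cl-op}, which provides the chain $\pumc(G) \le \pum(G)$ valid for every graph (with the closed-neighborhood unique-maximum chromatic number bounded by the open-neighborhood one).

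Chaining these two inequalities gives $\pumc(G) \le \pum(G) \le 5$ for every outerplanar graph $G$, which is exactly the statement of the corollary. No additional construction, recoloring argument, or case analysis is required because the hard work has already been done in proving Theorem~\ref{thm:pumo-out} (the minimum-counterexample argument using $2$-degeneracy and Proposition~\ref{prop:outer-2trian}) and in verifying Proposition~\ref{prop:cl-op} (observing that a pUMo-coloring is automatically a pUMc-coloring since the properness of the coloring forces the vertex's own color to differ from the open-neighborhood maximum).

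Since there is essentially no obstacle to overcome here, the only thing I would be careful about is phrasing: making sure it is clear which inequality comes from which result, and not inflating a one-line observation into something that looks substantial. I would therefore present the proof simply as a direct combination of the two cited results, rather than attempting an independent argument. Tightness of the bound $5$ is handled separately by Proposition~\ref{prop:pumc-out} and is not part of what this corollary claims, so I would not address it here.
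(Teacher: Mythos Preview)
Your proposal is correct and matches the paper's own approach exactly: the paper also derives the corollary immediately from Theorem~\ref{thm:pumo-out} combined with Proposition~\ref{prop:cl-op}, with no additional argument.
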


The upper bound from Corollary~\ref{cor:pumc-out} is the best possible due to, e.g., the outerplanar graph $O_{\mathrm{pUMc}}$ depicted in Figure~\ref{fig:pumc-out5}.
\begin{figure}[htp!]
	$$
		\includegraphics{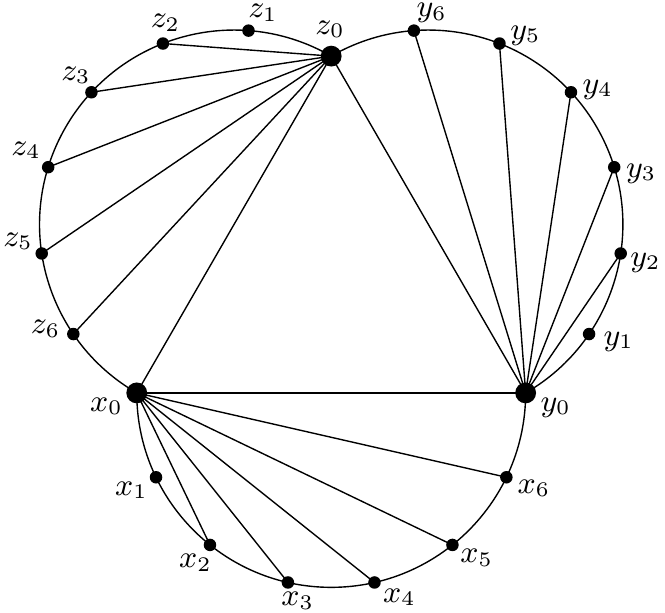}
	$$
	\caption{The outerplanar graph $O_{\mathrm{pUMc}}$ with $\pumc(O_{\mathrm{pUMc}}) = 5$.}
    \label{fig:pumc-out5}	
\end{figure}

\begin{proposition}
	\label{prop:pumc-out}
	The outerplanar graph $O_{\mathrm{pUMc}}$ in Figure~\ref{fig:pumc-out5} has $\pumc(G) = 5$.
\end{proposition}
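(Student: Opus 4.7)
The plan is to prove the nontrivial direction $\pumc(O_{\mathrm{pUMc}}) \ge 5$ by contradiction, assuming that $O_{\mathrm{pUMc}}$ admits a pUMc-coloring $\sigma$ with colors from $\{1,2,3,4\}$. The upper bound $\pumc(O_{\mathrm{pUMc}}) \le 5$ is immediate from Corollary~\ref{cor:pumc-out}, so the whole work is in establishing the lower bound.

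My first step would be to exploit the symmetry of $O_{\mathrm{pUMc}}$: identify the repeated gadget structure (several isomorphic blocks joined along cut-vertices) and focus the analysis on the vertices of minimum degree and their closed neighborhoods inside one gadget, since these carry the tightest unique-maximum constraints. For every $2$-vertex $v$ with neighbors $u,w$, properness implies $\sigma(u),\sigma(w)\neq\sigma(v)$, and the unique-maximum requirement in $N[v]$ forces either $\sigma(u)\neq\sigma(w)$ (so the larger of those two is the unique maximum, unless $\sigma(v)$ exceeds both), or, if $\sigma(u)=\sigma(w)$, that $\sigma(v)$ strictly exceeds $\sigma(u)=\sigma(w)$. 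This rigid trichotomy is what I would repeatedly invoke.

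Next, I would perform a case analysis on the color assigned to a fixed central vertex of the gadget (by permuting colors we may assume it receives, say, color $4$, or that color $4$ is used at all; otherwise the coloring uses at most $3$ colors and a smaller subconfiguration yields a contradiction directly). Propagating the unique-maximum rule from the central vertex outward through each closed neighborhood of its neighbors, I would track which colors can or must appear on each orbit of the gadget's symmetry group, together with the induced ordering relations such as $\mu(\cdot) > \sigma(\cdot)$ whenever a color repeats in a closed neighborhood. The goal is to show that in every case some closed neighborhood is forced to contain its maximum color twice, contradicting the pUMc property.

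The main obstacle will be controlling the combinatorial explosion of the case analysis. I plan to contain it by (i) fully exploiting the automorphisms of $O_{\mathrm{pUMc}}$ and of the color set to identify cases up to isomorphism, and (ii) first eliminating the subcases in which the maximum color $4$ is used at most once on each gadget, since then one of the central closed neighborhoods must have its maximum in $\{1,2,3\}$ and the propagation quickly collapses; the remaining subcases, where $4$ is used in a symmetric pattern across the gadgets, are handled by a parity-type counting argument on how many times each color appears on the cut-vertices shared by the gadgets. A concrete pUMc-coloring of $O_{\mathrm{pUMc}}$ with $5$ colors, matching the upper bound, would be exhibited (or left to the reader, as done in Proposition~\ref{prop:pcfo}).
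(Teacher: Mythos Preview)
Your proposal is not yet a proof, and more importantly it contains a genuine error that would derail the argument. You write that ``by permuting colors we may assume [the central vertex] receives, say, color $4$''. In a unique-maximum coloring the colors are linearly ordered and this ordering is what the definition refers to; you are \emph{not} free to permute colors. The only symmetry you may exploit is the automorphism group of the graph itself. This is exactly what the paper does, and it leads in the opposite direction from what you suggest: by the cyclic symmetry of $O_{\mathrm{pUMc}}$ (three hub vertices $x_0,y_0,z_0$ pairwise adjacent, hence receiving three distinct colors from $\{1,2,3,4\}$), one may rotate so that $\sigma(x_0)\in\{1,2\}$ and $\sigma(y_0)\in\{3,4\}$. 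The fact that a hub has a \emph{low} color is the engine of the proof, not that it has a high one.

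Once $\sigma(x_0)=a\in\{1,2\}$, the closed-neighborhood constraint at $x_0$ forces at most one of its fan neighbors $x_1,\dots,x_7$ (with $x_7=y_0$) to receive color $4$; the rest lie in $\{b,3\}$ with $b\in\{1,2\}\setminus\{a\}$. Now the constraint at each inner fan vertex $x_j$ of color $b$ forces $\{\sigma(x_{j-1}),\sigma(x_{j+1})\}=\{3,4\}$, which pins the single occurrence of $4$ to the middle position $x_4$ and then produces a contradiction at $x_6$. This is a short, direct propagation along the fan, not a parity count on cut-vertices; your emphasis on the $2$-vertex trichotomy and on ``orbits'' misses that the decisive constraint lives at the high-degree hub. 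I recommend redoing the argument starting from the correct symmetry reduction $\sigma(x_0)\in\{1,2\}$.
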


\begin{proof}
	Let the vertices of $O_{\mathrm{pUMc}}$ be labeled as in Figure~\ref{fig:pumc-out5}.
	Suppose to the contrary that $O_{\mathrm{pUMc}}$ admits a pUMc-coloring $\sigma$ using at most $4$ colors.
	By the cyclic symmetry, we may assume that $\sigma(x_0) = a \in \set{1,2}$ and $\sigma(y_0) \in \set{3,4}$.
	Let $x_7 = y_0$ and $b \in \set{1,2}\setminus \set{a}$.
	Note that at most one of the vertices $x_i$, $i \in \set{1,\dots,7}$, is colored by $4$, 
	otherwise $x_0$ does not have a unique maximum color in its open neighborhood.
	The remaining vertices $x_i$ are colored with colors from $\set{b,3}$.
	Moreover, if $\sigma(x_j) = b$ for some $j \in \set{2,\dots,6}$,
	then $\mu(x_j) = 4$ and $\set{\sigma(x_{j-1}), \sigma(x_{j+1})} = \set{3,4}$.
	It follows that $\sigma(x_4) = 4$, otherwise $\sigma(x_2)=\sigma(x_3)= 3$ or $\sigma(x_5)=\sigma(x_6) =3$.
	Thus, $\sigma(y_0)=3$ and $\sigma(x_6)=b$, a contradiction.
\end{proof}

The bounds for planar graphs compared to the ones for outerplanar graphs are again higher as expected at first sight.
\begin{theorem}
	\label{thm:pumc}
	For every planar graph $G$, it holds 
	$$
		\pumc(G) \le 8\,.
	$$
\end{theorem}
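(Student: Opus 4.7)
The plan is to mimic the strategy of the proof of Theorem~\ref{thm:iumc} (the improper closed-neighborhood case), splitting $V(G)$ into an independent dominating set $V_1$ and its complement $V_2$, coloring $V_1$ with five ``high'' colors via a facial unique-maximum coloring of $\Phi_G(V_2)$ and $V_2$ properly with three ``low'' colors. Since the coloring is with respect to closed neighborhoods, each vertex of $V_1$ will receive a color strictly larger than any color used on $V_2$, and hence its own color will be the unique maximum in its closed neighborhood. For each vertex of $V_2$, the maximum in its closed neighborhood will necessarily come from its $V_1$-neighbors, and will be made unique by the facial unique-maximum property of $\Phi_G(V_2)$.

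Concretely, I would first invoke the Four Color Theorem to fix a proper coloring $c \colon V(G) \to \{1,2,3,4\}$ of $G$ (together with a plane embedding), and then greedily extend the color class $c^{-1}(1)$ to a maximal independent set $V_1$ of $G$; set $V_2 = V(G) \setminus V_1$. Maximality of $V_1$ guarantees that every vertex of $V_2$ has a neighbor in $V_1$, while $V_2 \subseteq c^{-1}(\{2,3,4\})$ ensures that the assignment $v \mapsto c(v)-1$ is a proper $3$-coloring of $G[V_2]$ using colors from $\{1,2,3\}$, which I would use as $\sigma|_{V_2}$. Next, I would form the facial closure $\Phi_G(V_2)$, which is a plane graph on vertex set $V_1$, and apply Theorem~\ref{thm:fac_um} to obtain a proper facial unique-maximum coloring $\alpha$ of $\Phi_G(V_2)$ using at most five colors, relabeled to lie in $\{4,5,6,7,8\}$, which I would use as $\sigma|_{V_1}$.

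The verification is then routine: properness holds because $V_1$ is independent, the restriction $\sigma|_{V_2}$ is proper, and the color sets $\{1,2,3\}$ and $\{4,\ldots,8\}$ are disjoint. For any $v \in V_1$, we have $\sigma(v) \ge 4$ while all neighbors lie in $V_2$ with colors $\le 3$, so $\sigma(v)$ is the unique maximum in $N[v]$. For any $v \in V_2$, the maximum of $N[v]$ lies among $v$'s nonempty set of $V_1$-neighbors; it is trivially unique if there is only one such neighbor, forced distinct from a second by the edge added to $\Phi_G(V_2)$ when there are two, and unique on the face of $\Phi_G(V_2)$ corresponding to $v$ by the facial unique-maximum property of $\alpha$ when there are three or more.

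The main obstacle is producing a set $V_1$ that is simultaneously independent (for properness), dominating (so every vertex of $V_2$ sees a ``high'' color), and whose complement $G[V_2]$ is $3$-colorable (to fit within the budget of $8$ colors). The plain greedy first-color-class construction used in Theorems~\ref{thm:iumc} and~\ref{thm:pumo} yields an independent dominating set but does not constrain $\chi(G[V_2])$, while a color class of a $4$-coloring yields $3$-colorability of the complement but is not necessarily dominating; the key trick is to extend a color class of a $4$-coloring to a maximal independent set, which secures all three properties at once.
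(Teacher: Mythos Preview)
Your proof is correct and follows essentially the same strategy as the paper's: split $V(G)$ into an independent dominating set $V_1$ whose complement is properly $3$-colorable, color $V_1$ with five high colors via a facial unique-maximum coloring of $\Phi_G(V_2)$, and color $V_2$ with the three low colors. The only difference is cosmetic: the paper obtains $V_1$ by taking a proper $4$-coloring in which the top color class is used on as many vertices as possible (which forces that class to be a maximal independent set, hence dominating, with the remaining three classes $3$-coloring $V_2$), whereas you take an arbitrary $4$-coloring and explicitly extend one color class to a maximal independent set---achieving the same three properties by a slightly different route.
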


We prove the theorem in a similar manner as Theorem~\ref{thm:pcfo}.
\begin{proof}
	By abusing the notation, we let $G$ represent also a fixed plane embedding of $G$.
	Color the vertices of $G$ properly by the Four Color Theorem with at most $4$ colors
	with an additional assumption that we use color $4$ on the maximal number of vertices.
	In this way, we obtain a coloring $\sigma$ in which every vertex is either colored with $4$ or it is adjacent to a vertex of color $4$.
	
	Let $V_1$ be the set of vertices of $G$ colored by $4$ and $V_2 = V(G)\setminus V_1$.	
	Now, we will use the facial closure $\Phi_G(V_2)$.
	By Theorem~\ref{thm:fac_um}, it admits a facial unique-maximum coloring $\alpha$ using at most $5$ colors;
	we use the set of colors $\set{4,5,6,7,8}$.
	
	Now, recolor the vertices of $G$ by setting
	$\sigma(v) = \alpha(v)$ for every $v \in V_1$.
	We claim that the coloring $\sigma$ is a pUMc-coloring of $G$.
	Obviously $\alpha$ is a proper coloring.
	Next, for every vertex $v$ in $V_2$,
	there is a unique maximal color from $\set{4,\dots,8}$ in its neighborhood, 
	since it is adjacent to at least one vertex from $V_1$.
	The cases with $d(v) \le 2$ are again trivial, and the case $d(v) \ge 3$ follows from the properties of $\Phi_G(V_2)$.
	For every vertex $v$ in $V_1$, all its neighbors have a color lower than itself, and thus $\mu(v) = \sigma(v)$.
\end{proof}

On the other hand, for some planar graphs at least $6$ colors are needed for a pUMo-coloring.
We define the graph $H_{\mathrm{pUMo}}$ as the planar graph obtained from the graph $K_4$ with every edge 
directed in such a way that the outdegree and indegree of every vertex is positive (see the left graph in Figure~\ref{fig:pumo-sub}). 
Then, we replace every arc of $K_4$ with a copy of the configuration $H$ (the right graph in Figure~\ref{fig:pumo-sub}) 
such that $x$ is identified with the initial vertex and $z$ is identified with the terminal vertex of an arc.
We denote the vertices of $H_{\mathrm{pUMo}}$ corresponding to the vertices of $K_4$ by $x_1$, $x_2$, $x_3$, and $x_4$,
and we refer to them as the {\em base vertices}.

\begin{figure}[htp!]
	$$
	\includegraphics{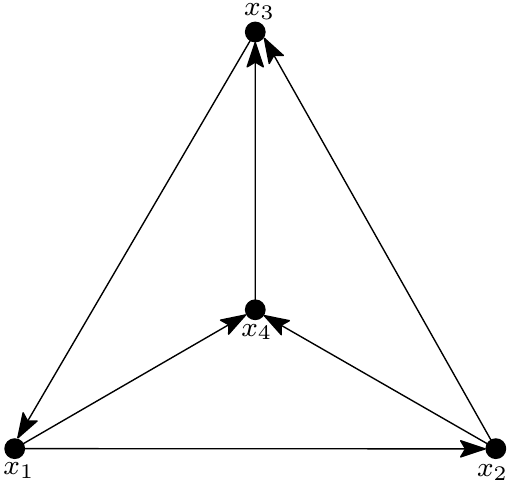} \quad\quad\quad\quad\quad
	\includegraphics{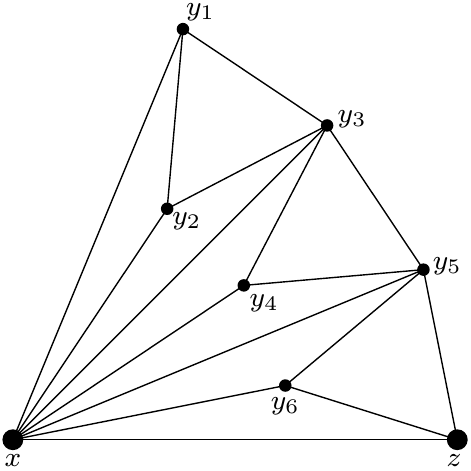}
	$$
	\caption{The graph $K_4$ with directed edges (left), and
		the configuration $H$ (right) by which we replace every arc of $K_4$.}
	\label{fig:pumo-sub}
\end{figure}

\begin{proposition}
	\label{prop:pumc}
	The planar graph $H_{\mathrm{pUMo}}$ has $\pumc(H_{\mathrm{pUMo}}) = 6$.
\end{proposition}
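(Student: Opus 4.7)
The approach splits into the upper bound $\pumc(H_{\mathrm{pUMo}}) \le 6$ and the lower bound $\pumc(H_{\mathrm{pUMo}}) \ge 6$. For the upper bound I would exhibit an explicit pUMc-coloring with six colors: assign the four base vertices $x_1,x_2,x_3,x_4$ pairwise distinct colors from $\set{3,4,5,6}$ and color each copy of the gadget $H$ with internal colors drawn mostly from $\set{1,2}$, using one or two higher colors only where an internal closed neighborhood needs a strictly larger unique maximum. Each base vertex retains its own color as the unique maximum in its closed neighborhood because its neighbors lie in the gadget interiors and receive strictly smaller colors; the routine case check would be left to the reader, as in Proposition~\ref{prop:pcfo}.

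For the lower bound I would assume a pUMc-coloring $\sigma$ of $H_{\mathrm{pUMo}}$ using at most five colors and derive a contradiction in two steps. Step 1 is a local analysis of one copy of $H$ replacing an arc $(x,z)$: I would enumerate, up to permutation of the five colors and up to the freedom in the colors appearing on the external neighbors of $x$ and $z$, which pairs $(\sigma(x),\sigma(z))$ admit an extension to a valid pUMc-coloring of $H$, and show that the gadget enforces an asymmetric constraint reflecting the orientation of the arc (for instance, that the unique maximum color of $x$ in $N[x]\cap V(H)$ must come from a designated internal vertex, which in turn forbids certain colors at $z$). Step 2 combines these local constraints over all six arcs of the oriented $K_4$. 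Because every base vertex has positive in- and out-degree, the oriented $K_4$ contains a directed cycle; the per-arc constraint then either chains into an impossible strict inequality around this cycle on at most five colors, or it forces two gadgets incident to some $x_i$ to contribute the same color on the top-level of $N[x_i]$, breaking the unique-maximum condition at $x_i$.

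The main obstacle is Step 1. Unlike the open-neighborhood argument of Proposition~\ref{prop:pcfo}, the colors of the endpoints $x$ and $z$ themselves compete as possible unique maxima in their own closed neighborhoods, so the case split inside the gadget is intrinsically more delicate: one must track which internal colors may legally exceed $\sigma(x)$ or $\sigma(z)$ without spoiling uniqueness at some interior vertex, and which colors are forced to appear twice on the boundary of $H$ to certify a unique maximum. Exploiting the symmetries of $H$ to prune the enumeration should keep the case analysis manageable. Once the gadget constraint is distilled, the global contradiction on $K_4$ in Step 2 is a short combinatorial argument relying only on its orientation and the pigeonhole principle.
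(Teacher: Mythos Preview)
Your two-step plan (first characterize all admissible pairs $(\sigma(x),\sigma(z))$ on a single gadget with five colors, then chain the resulting constraints around a directed cycle of the oriented $K_4$) is plausible in outline, but it is considerably heavier than what the paper actually does, and Step~1 as you describe it is still only a hope: you have not yet identified what asymmetric constraint the gadget really enforces, and the ``for instance'' you give is speculative. The paper avoids this entire enumeration by inserting a short global argument \emph{before} the local one. Namely, one first observes that in any $5$-coloring the colour $5$ is extremely constrained: if some base vertex has colour $5$ then no other vertex does, and otherwise each base vertex can be the tail of at most one arc whose gadget contains a vertex of colour $5$. Either way at least two of the six gadgets are entirely $5$-free, and among those one finds a gadget whose initial vertex $x$ satisfies $\sigma(x)\in\{1,2,3\}$. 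The local analysis is then carried out with only the four colours $\{1,2,3,4\}$ on a single gadget with $\sigma(x)=a\le 3$: the two triangles of neighbours of $x$ inside $H$ are forced to use $\{b,c,4\}$ each, which quickly pushes the colour $4$ onto $y_4$ and creates two $4$'s in the closed neighbourhood of both $y_3$ and $y_5$.

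So the essential idea you are missing is this global-then-local reduction: argue first that colour $5$ can be excised from some gadget (and that its tail has colour at most $3$), and only then analyze that gadget. This collapses your delicate five-colour case split into a short four-colour forcing argument, and it makes the directed-cycle combination in your Step~2 unnecessary, since the contradiction already appears inside a single copy of $H$. Your route might be completable, but it is doing strictly more work than required. (As a minor point, your reference to Proposition~\ref{prop:pcfo} as an ``open-neighbourhood argument'' to contrast with is off: that proposition concerns the conflict-free invariant $\pcf$, not $\pum$, and its proof has a rather different structure.)
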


\begin{proof}
	Suppose to the contrary that $H_{\mathrm{pUMo}}$ admits a pUMo-coloring $\sigma$ with at most $5$ colors.
	If some base vertex is colored with $5$, then no other vertex has color $5$,
	otherwise at least one vertex of $H_{\mathrm{pUMo}}$ would be adjacent to two vertices of maximum color $5$.
	On the other hand, if no base vertex is colored with $5$, 
	then color $5$ is used on at most four copies of $H$;
	namely, every base vertex can be the initial vertex of at most one arc that corresponds to a copy of $H$ having a vertex of color $5$.	
	
	Therefore, in both cases above, there are at least two copies of $H$ not incident with a vertex of color $5$.	
	It follows that there is a copy of $H$ with $\sigma(x) \in \set{1,2,3}$.
	Let $\set{a,b,c} = \set{1,2,3}$.
	Set $\sigma(x) = a$.
	Then, $\set{\sigma(y_1),\sigma(y_2),\sigma(y_3)} = \set{b,c,4}$
	and $\set{\sigma(y_5),\sigma(y_6),\sigma(z)} = \set{b,c,4}$.
	This means that either $\sigma(y_i) = 4$ or $\mu(y_i) = 4$, for $i \in \set{3,5}$.
	Moreover, if $\sigma(y_3)=4$ (resp., $\sigma(y_5)=4$), 
	then $y_5$ (resp., $y_3$) has in its neighborhood maximum color $4$ twice, a contradiction.
	Therefore, $\set{\sigma(y_3),\sigma(y_5)}=\set{b,c}$ and consequently $\sigma(y_4) = 4$.
	But in this way, $y_3$ and $y_5$ both have the maximum color $4$ twice in their neighborhoods, a contradiction.

	We leave to the reader the exercise of confirming that $6$ colors are sufficient to color $H_{\mathrm{pUMo}}$.
\end{proof}

\section{Conclusion}	
\label{sec:con}
	
We began this paper by presenting results for the improper variants of considered colorings.
As already mentioned, there are still gaps between the lower and upper bounds in all the cases of Theorem~\ref{thm:im-pla}
and in three cases of Theorem~\ref{thm:im-out}.
Based on our experience with provided constructions, we believe that the current lower bounds are also correct and thus we propose the following.
\begin{conjecture}
	\label{conj:im-pla}
	For the classes of planar graphs $\mathcal{P}$ and outerplanar graphs $\mathcal{O}$, it holds
	\begin{itemize}
		\item[$(a)$] $\cf(\mathcal{P}) = 4$;
		\item[$(b)$] $\cfc(\mathcal{P}) = 3$;
		\item[$(c)$] $\um(\mathcal{P}) = 5$;
		\item[$(d)$] $\umc(\mathcal{P}) = 4$;
		\item[$(e)$] $\cf(\mathcal{O}) = 3$;
		\item[$(f)$] $\um(\mathcal{O}) = 4$;
		\item[$(g)$] $\umc(\mathcal{O}) = 3$;		
	\end{itemize}
\end{conjecture}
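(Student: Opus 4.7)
Since each lower bound in the conjecture is already witnessed by an explicit construction from Propositions~\ref{prop:iumc}, \ref{prop:iumo-out}, \ref{prop:iumo}, and the references cited in Theorems~\ref{thm:im-pla} and~\ref{thm:im-out}, the plan is to tighten the corresponding upper bounds, handling the outerplanar and planar cases with separate strategies.

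For the outerplanar parts (e), (f), and (g), I would proceed by induction on $|V(G)|$ using the $2$-degeneracy of outerplanar graphs in the style of the proof of Theorem~\ref{thm:pumo-out}. First eliminate pendant vertices and adjacent pairs of $2$-vertices; then invoke Proposition~\ref{prop:outer-2trian} to locate a $2$-vertex $v$ incident with a triangle $vv_1v_2$, color $G-v$ by the inductive hypothesis, and extend. In each of the improper variants the extension constraints relax substantially compared with the proper setting: for (e) we only need a single color in $N(v)=\set{v_1,v_2}$ to be unique in $N(v_i)$; for (f) we must additionally avoid disturbing $\mu(v_1)$ or $\mu(v_2)$; and for (g) the color of $v$ itself may serve as the unique color of $N[v]$. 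A careful case analysis of the forbidden colors at $v$ should close all three cases within $3$, $4$, and $3$ colors, respectively, possibly after strengthening the inductive statement to track, for each remaining vertex, the multiset of colors currently present in its (open or closed) neighborhood.

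For the planar parts (b) and (d), the natural route is through dominating-type partitions: pick an independent set $I\subseteq V(G)$ such that every vertex outside $I$ has a \emph{unique} neighbor in $I$, color $I$ with the largest color, and color $G\setminus I$ using the remaining palette. For (d) the residual graph is planar and can be handled by the Four Color Theorem, yielding the desired $4$; for (b) one would need a triangle-free reduction amenable to a Gr\"otzsch-style $3$-coloring. For the open-neighborhood parts (a) and (c), the facial-closure framework of Section~\ref{sec:improper-um} is too coarse, so I would attempt a discharging argument on a minimum counterexample, identifying reducible configurations (small-degree vertex clusters, light edges, short separating cycles) and deriving a contradiction from Euler's formula; the argument for (c) could further exploit the ordering of colors by iteratively peeling off an independent set receiving the top color.

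The \textbf{main obstacle} is undoubtedly part (a): closing $\cf(\mathcal{P})$ from $5$ down to $4$ has resisted the standard facial-closure and greedy techniques, and would likely require a delicate structural theorem for planar graphs. Part (c), with a gap from $10$ down to $5$, is similarly daunting and appears to lie beyond the two-class facial-closure framework used in Theorem~\ref{thm:pumo}; a genuinely new idea, perhaps exploiting a fractional or entropy-compression argument, seems necessary. The outerplanar cases (e)--(g) and, with more work, (b) and (d), look tractable by the schemes above, but (a) and (c) appear to be the genuinely hard open problems in the list.
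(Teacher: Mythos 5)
The statement you are addressing is a \emph{conjecture}: the paper deliberately leaves all seven parts open and offers no proof, only the lower-bound constructions (Propositions~\ref{prop:iumc}, \ref{prop:iumo-out}, \ref{prop:iumo} and the cited references) together with weaker upper bounds ($\cf(\mathcal{P})\le 5$, $\cfc(\mathcal{P})\le 4$, $\um(\mathcal{P})\le 10$, $\umc(\mathcal{P})\le 6$, etc.). Your proposal is therefore not being measured against a proof in the paper; it has to stand on its own, and as written it does not. It is a survey of plausible attack routes, and for the two parts you yourself flag as the hard ones, (a) and (c), you explicitly concede that no argument is available. A proof attempt that ends by declaring the key cases ``genuinely hard open problems'' has not proved the statement.

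The concrete gaps in the parts you do claim are tractable are as follows. For (b) and (d) your scheme requires an independent set $I$ such that every vertex outside $I$ has a \emph{unique} neighbor in $I$; this is essentially an efficient dominating set (perfect code), and planar graphs need not admit one, so the construction cannot even get started in general. Moreover, for (b) you would additionally need the residual graph to be $3$-colorable, which a planar graph with triangles need not be, and the ``triangle-free reduction'' you invoke is not specified. For the outerplanar parts (e)--(g), the reducible configurations of Theorem~\ref{thm:pumo-out} and Proposition~\ref{prop:outer-2trian} do not transfer automatically to the improper setting with fewer colors: for instance, in case (f) with only $4$ colors, extending to a $2$-vertex $v$ on a triangle $vv_1v_2$ requires a color for $v$ avoiding $\mu(v_1)$ and $\mu(v_2)$ while also keeping the two colors seen by $v$ distinct and not creating a second occurrence of $\mu(u)$ for other neighbors $u$; with the open neighborhood the color of $v_1$ itself also constrains $v$'s view, and a count of forbidden colors can exhaust the palette. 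You acknowledge this by saying the inductive statement may need strengthening, but the strengthened invariant is exactly the missing content. In short, every part of the conjecture remains unproved by your proposal, in agreement with the paper's own assessment that these are open problems.
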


In the proper setting, while we established tight bounds for outerplanar graphs in all cases,
there are three gaps between the lower and the upper bounds for the corresponding chromatic numbers for planar graphs (see Theorem~\ref{thm:pr-pla}).
In two cases, we believe that the current lower bounds are also correct.
\begin{conjecture}
	For the class of planar graphs $\mathcal{P}$, it holds
	\begin{itemize}
		\item[$(a)$] $\pcf(\mathcal{P}) = 6$;
		\item[$(b)$] $\pumc(\mathcal{P}) = 6$.
	\end{itemize}
\end{conjecture}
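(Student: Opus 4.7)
The conjecture asserts $\pcf(\mathcal{P}) = 6$ and $\pumc(\mathcal{P}) = 6$. Since the lower bounds $\ge 6$ are already supplied by the explicit planar witnesses $H_{\mathrm{pCFo}}$ and $H_{\mathrm{pUMo}}$ (Propositions \ref{prop:pcfo} and \ref{prop:pumc}), the entire content of the conjecture is an improvement of the upper bounds $8$ (Theorems \ref{thm:pcfo} and \ref{thm:pumc}) down to $6$. My plan is to sharpen the facial closure technique so that the two color palettes used on $V_1$ and $V_2$ partially overlap, rather than being chosen disjoint as in the proofs given in the paper.

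For part $(a)$, I would keep the partition of $V(G)$ into $V_1$ (a ``greedy'' dominating set in which every vertex is colored $1$ or adjacent to color $1$) and $V_2 = V(G)\setminus V_1$, but color $V_1$ via a facial CF-coloring of $\Phi_G(V_1)$ with colors from $\set{1,2,3,4}$ and $V_2$ via a facial CF-coloring of $\Phi_G(V_2)$ with colors from $\set{3,4,5,6}$, overlapping on $\set{3,4}$. The key step is to choose these two colorings so that, for each $v \in V_i$, some color in $N(v)\subseteq V_{3-i}$ is unique \emph{within $N(v)$}; the overlap makes this genuinely harder because a ``same-side'' occurrence of $3$ or $4$ in $N(v)$ could spoil the uniqueness witnessed by the other side. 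I would attempt to strengthen Theorem~\ref{thm:fac_cf} to a constrained version in which certain vertices carry restricted color lists, and then propagate the constraints between $\Phi_G(V_1)$ and $\Phi_G(V_2)$ in a fixed-point fashion, iterating until a compatible pair of colorings is found.

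For part $(b)$, a parallel strategy must additionally respect the ordering of colors. Here I would first aim to show that every planar graph admits a proper $4$-coloring in which one color class $V_1$ dominates $G$ and induces a bipartite subgraph; then color $V_1$ with colors from $\set{1,2}$ and color $V_2$ via a facial UM-coloring of $\Phi_G(V_2)$ with colors from $\set{3,4,5,6}$, so that for every $v\in V_2$ the unique maximum in $N[v]$ lies on its $V_1$-neighbor(s) and comes from the facial UM-structure, while for $v\in V_1$ properness plus the restriction of $V_1$ to $\set{1,2}$ automatically supplies the unique maximum (equal to $\sigma(v)$) in $N[v]$. The demand that $V_1$ be both dominating and bipartite is the crux: such a decomposition is not guaranteed by any of the classical planar-coloring theorems, so either a new structural result (possibly via defective or fractional arguments) is needed, or $V_1$ must be replaced by a more flexible object defined relative to $G$'s discharging structure.

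In both parts, the main obstacle I foresee is the interaction between the two overlap colors (for pCFo) or the ordering constraint (for pUMc) and the local structure around low-degree vertices. I expect that a successful proof will combine this refined facial-closure framework with a discharging argument on a minimum counterexample, in the spirit of the Four Color Theorem, and it is the identification of the correct reducible configurations --- configurations whose removal still leaves enough palette freedom to reinsert the deleted vertices without violating uniqueness or the maximum condition --- that will dominate the work; this is presumably why the conjecture remains open.
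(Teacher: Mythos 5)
The statement you are addressing is a conjecture, not a theorem: the paper offers no proof of it, and your text is likewise not a proof but a research programme, so there is nothing here that establishes the result. You rightly observe that the lower bounds $\pcf(\mathcal{P}) \ge 6$ and $\pumc(\mathcal{P}) \ge 6$ are already supplied by Propositions~\ref{prop:pcfo} and~\ref{prop:pumc}, so only the upper bounds are at stake; but every step you propose toward lowering $8$ to $6$ is either unproved or flawed. For part $(a)$, the overlapping palettes $\set{1,2,3,4}$ and $\set{3,4,5,6}$ destroy exactly the mechanism that makes the proof of Theorem~\ref{thm:pcfo} work: there, the unique color guaranteed for $v \in V_2$ by the face of $\Phi_G(V_2)$ spanned by $N(v)\cap V_1$ cannot be duplicated by a neighbor of $v$ lying in $V_2$ precisely because the two palettes are disjoint. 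You acknowledge this and appeal to a ``constrained'' strengthening of Theorem~\ref{thm:fac_cf} with list restrictions propagated between $\Phi_G(V_1)$ and $\Phi_G(V_2)$ by a fixed-point iteration, but no such strengthening is stated, let alone proved; Theorem~\ref{thm:fac_cf} rests on the Four Color Theorem, which has no four-color list analogue, and you give no argument that the proposed iteration terminates or converges to a compatible pair of colorings.

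Part $(b)$ contains a concrete error. A color class of a proper coloring is independent, so the bipartiteness demand is vacuous; more importantly, if the dominating set $V_1$ is colored with the \emph{low} colors $\set{1,2}$ while $V_2$ receives colors from $\set{3,4,5,6}$, then a non-isolated vertex $v \in V_1$ has $\sigma(v) \le 2$ but neighbors of color at least $3$, so $\sigma(v)$ is certainly \emph{not} the maximum of $N[v]$; your claim that properness ``automatically supplies the unique maximum equal to $\sigma(v)$'' for $v\in V_1$ is false. The unique maximum for such $v$ would have to come from its $V_2$-neighbors, which constitute a face of $\Phi_G(V_1)$, not of $\Phi_G(V_2)$, so the facial unique-maximum coloring you invoke gives no control over it. This is precisely why the proof of Theorem~\ref{thm:pumc} places the dominating class on the \emph{high} colors $\set{4,\dots,8}$ and leaves $V_2$ on $\set{1,2,3}$. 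Your closing paragraph concedes that the reducible configurations needed for a discharging argument have not been identified; in short, the conjecture remains open and your proposal does not close it.
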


We are particularly intrigued by the upper bound of $10$ for $\pum$.
It seems that the correct upper bound is much lower, 
but a simple example of the Fritsch graph needing $6$ colors
does not allow us to believe that there are no planar graphs $G$ with $\pum(G) = 7$.
Therefore, we conjecture the following.
\begin{conjecture}
	For the class of planar graphs $\mathcal{P}$, it holds
	$$
		\pum(\mathcal{P}) \le 7\,.
	$$
\end{conjecture}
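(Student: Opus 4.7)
The plan is to sharpen the facial-closure argument from the proof of Theorem~\ref{thm:pumo}, which uses two disjoint palettes of $5$ colors each and hence $10$ colors in total. The goal is to keep only one application of Theorem~\ref{thm:fac_um} (giving a palette of size $5$, which is tight for facial unique-maximum colorings) and to handle the other partition class with only two additional ``top'' colors, for a total of~$7$.

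Concretely, I would fix a plane embedding of $G$ and produce a maximal independent set $V_1 \subseteq V(G)$, exactly as in the proof of Theorem~\ref{thm:pumo}. Set $V_2 = V(G) \setminus V_1$, and apply Theorem~\ref{thm:fac_um} to $\Phi_G(V_1)$ to obtain a facial unique-maximum coloring $\alpha$ of $V_2$ using the palette $\{1,2,3,4,5\}$; then color $V_1$ using only $\{6,7\}$. Properness of the combined coloring is automatic, since $V_1$ is independent and the two palettes are disjoint. For every $u \in V_1$, all its neighbors lie in $V_2$ with colors in $\{1,\dots,5\}$, and the facial unique-maximum property of $\Phi_G(V_1)$ yields a unique maximum color in $N(u)$, exactly as in Theorem~\ref{thm:pumo}. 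Since $V_1$ is dominating and carries the two highest colors, the only remaining requirement is that every $v \in V_2$ with $|N(v) \cap V_1| \ge 2$ has exactly one neighbor in $V_1$ colored with $7$.

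The problem thus reduces to a combinatorial question on the hypergraph $\mathcal{H}$ whose vertices are $V_1$ and whose hyperedges are the sets $N(v) \cap V_1$ of size at least $2$ for $v \in V_2$: does there always exist a subset $S \subseteq V_1$ (the vertices that will receive color $7$) with $|S \cap e| = 1$ for every $e \in \mathcal{H}$? This ``perfect transversal'' requirement is the decisive obstacle, since such a set need not exist for an arbitrary hypergraph and the corresponding decision problem is NP-hard in general. I expect progress to hinge on exploiting planarity in the choice of $V_1$, for instance by taking $V_1$ to be a modified color class of a Four-Color-Theorem coloring so that $\mathcal{H}$ inherits a planar incidence structure, after which a perfect transversal could be constructed greedily or through Euler-formula counting. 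As a fallback, one can relax to three top colors $\{5,6,7\}$ on $V_1$, trading the perfect-transversal condition for the need to prove a $4$-color analog of Theorem~\ref{thm:fac_um} on $\Phi_G(V_1)$ restricted to the structured subgraphs that arise here, or analyse a minimum counterexample by discharging to rule out the dense local configurations that would obstruct the transversal.
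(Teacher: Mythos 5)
The statement you are addressing is stated in the paper only as a conjecture; the paper offers no proof of it and establishes only the weaker bound $\pum(\mathcal{P})\le 10$ (Theorem~\ref{thm:pumo}), so the only thing to compare your argument against is that theorem's method, which you correctly identify as your starting point. Your reduction is sound as far as it goes: with $V_1$ a maximal independent set colored from $\{6,7\}$ and $V_2$ colored from $\{1,\dots,5\}$ via Theorem~\ref{thm:fac_um} applied to $\Phi_G(V_1)$, the combined coloring is proper, every vertex of $V_1$ has a unique maximum in its open neighborhood, and the only remaining obstruction is exactly the condition you isolate: the set $S$ of vertices of color $7$ must meet each hyperedge $N(v)\cap V_1$ of size at least $2$ in exactly one vertex.

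But this is where the argument genuinely stops, and the gap is not merely technical. Restricted to the hyperedges of size $2$, the condition $|S\cap e|=1$ says that the indicator function of $S$ is a proper $2$-coloring of the graph on $V_1$ whose edges are these pairs, and that graph can be non-bipartite even in very small planar examples: in the cycle $C_6$ with vertices $a,v_{ab},b,v_{bc},c,v_{ca}$ in cyclic order, the set $V_1=\{a,b,c\}$ is a maximal independent set (and a possible outcome of the greedy first color class), and the three hyperedges $\{a,b\}$, $\{b,c\}$, $\{c,a\}$ form an odd cycle, so no perfect transversal exists. Hence the approach cannot work for an arbitrary (or greedily chosen) maximal independent set, and everything hinges on the unproved claim that planarity allows one to choose $V_1$ so that the transversal exists --- a claim for which you offer only speculative directions. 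The fallback with three top colors fares no better as stated, since a $4$-color analogue of Theorem~\ref{thm:fac_um} is false in general (the bound of $5$ is tight by~\cite{LidMesSkr18}), and you do not identify a restricted class of plane graphs arising as $\Phi_G(V_1)$ for which $4$ colors would suffice. In short, the proposal is an honest reduction of the conjecture to another open combinatorial problem, not a proof.
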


Very recently, a relaxed version of pCFo-coloring was considered by Petru\v{s}evski and \v{S}kre\-kovski~\cite{PetSkr21}.
It is called an {\em odd coloring} (with respect to open neighborhood); in our terms, denote it a {\em $\mathrm{pODDo}$-coloring}. 
It is defined as a proper coloring where in the open neighborhood of every vertex there is a color appearing an odd number of times.
%
Among other results Petru\v{s}evski and \v{S}krekovski proved that $9$ colors suffice for an pODDo-coloring of any planar graph
and conjectured that the bound can be reduced to $5$.

In a very short interval of less than a month, three additional papers considering this coloring were published on arXiv.
First, Cranston~\cite{Cra22} established several results for sparse graphs, and as a corollary, he obtained bounds $6$ and $5$
for planar graphs of girth $6$ and at least $7$, respectively.
Then, Caro et al.~\cite{CarPetSkr22} established the bound $8$ for planar graphs with specific properties, 
and finally, Petr and Portier~\cite{PetPor22} proved the bound $8$ for all planar graphs.

The same bound follows also from our bound on pCFo-coloring of planar graphs in Theorem~\ref{thm:pcfo}; 
namely, our coloring has a stronger property that there is a unique color in every open neighborhood.
The conjectured lower bound of $5$ colors for pODDo-coloring of planar graphs is still widely open.

\paragraph{Acknowledgment.} 
The authors thank the anonymous reviewers for their comments and pointing out additional references on the considered topics.
I. Fabrici, S. Rindo\v{s}ov\'{a}, and R. Sot\'{a}k acknowledge the financial support from the projects APVV--19--0153 and VEGA 1/0574/21.
B. Lu\v{z}ar was partially supported by the Slovenian Research Agency Program P1--0383 and the projects J1--1692 and J1--3002.


\end{document}